\numberwithin{equation}{section}
\def\m{\medskip}
\newtheorem{thm}{Theorem}[section]
\newtheorem{lem}[thm]{Lemma}
\newtheorem{lemma}[thm]{Lemma}
\newtheorem{prop}[thm]{Proposition}
\newtheorem{propdef}[thm]{Proposition-Definition}
\newtheorem{cor}[thm]{Corollary}
\newtheorem{rem}[thm]{Remark}
\newtheorem{remark}[thm]{Remark}
\newtheorem{defin}[thm]{Definition}
\newtheorem{df}[thm]{Definition}
\newtheorem{example}[thm]{Example}
\newtheorem{question}[thm]{Question}
\newcommand\theoref{Theorem~\ref}
\newcommand\lemref{Lemma~\ref}
\newcommand\propref{Proposition~\ref}
\newcommand\defref{Definition~\ref}
\newcommand\secref{Section~\ref}
\newcommand{\SU}{{\mathcal{U}}}
\newcommand{\ZZ}{\mathbb{Z}}
\newcommand{\CC}{\mathbb{C}}
\newcommand{\RR}{\mathbb{R}}
\newcommand{\inc}{\hookrightarrow}
\newcommand{\la}{\langle}
\newcommand{\ra}{\rangle}
\newcommand{\x}{\times}
\newcommand{\ov}{\overline}
\def\ga{\alpha} 
\def\gb{\beta}
\def\go{\omega}
\def\RR{\mathbb R}
\def\ZZ{\mathbb Z}
\def\s1{S^1}
\def\ms1{M/S^1}
\def\cat{\operatorname{cat}}
\def\wgt{\operatorname{wgt}}
\def\lcm{\operatorname{lcm}}
\def\gcd{\operatorname{gcd}}
\def\ts{\times}
\def\ds{\displaystyle}
\long\def\forget#1\forgotten{} %
\begin{document}

\title[Symplectically Aspherical Orbifolds]{Symplectic Asphericity, Category Weight, and 
Closed Characteristics of K-Contact Manifolds}

\author{Yuli Rudyak} 
\address{Department of Mathematics\\
University of Florida\\
Gainesville, Florida 32608, USA}

\author{Aleksy Tralle} 
\address{Department of Mathematics and Computer Sciences\\
University of Warmia and Mazury\\
S\l onczna 54, 10-710, Olsztyn\\
Poland}

\maketitle
\renewcommand{\thefootnote}{\fnsymbol{footnote}} 
\footnotetext{2010 \emph{Mathematics Subject Classification}:Primary 55M30. Secondary 53C25, 55P62, 57R19.}

\begin{abstract}
Let $M$ be a closed K-contact $(2n+1)$-manifold equipped with a quasi-regular K-contact structure. Rukimbira~\cite{Ruk1} proved that the Reeb vector field $\xi$ of this structure has at least $n+1$ closed characteristics. We note that $\xi$ has at least $2n+1$ closed characteristics provided that the space of  leaves of the foliation determined by $\xi$ is symplectically aspherical.
\end{abstract}

\tableofcontents

\section{Introduction} 

\begin{propdef}[\cite{BG}]\label{d:cont}\rm Given a  $(2n+1)$-dimensional manifold $M$, a {\em contact form} on $M$ is a 1-form $\ga$ on $M$ such that the inequality $\ga\land(d\ga)^n\neq 0$ is valid everywhere on $M$. In this case there is a unique vector field $\xi$, satisfying the equalities $\ga(\xi)=1$ and $i_{\xi}(d\ga)=0$. This vector field is called the {\em Reeb vector field}. An integral curve of the Reeb vector field $\xi$ is called a {\it characteristic} of the contact manifold $(M,\ga)$.
\end{propdef}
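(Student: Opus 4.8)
The statement is a Proposition-Definition, and the only assertion that needs an argument is the \emph{existence and uniqueness} of the Reeb vector field $\xi$ associated to a contact form $\ga$; the rest introduces terminology. My plan is to settle this fiberwise by linear algebra and then upgrade it to a smooth global statement.

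First I would fix $p\in M$ and study the skew-symmetric bilinear form $(d\ga)_p$ on the $(2n+1)$-dimensional space $T_pM$. Since $(d\ga)^{n+1}$ is a $(2n+2)$-form it vanishes identically, whereas $\ga\land(d\ga)^n\neq 0$ forces $(d\ga)^n_p\neq 0$; hence $(d\ga)_p$ has rank exactly $2n$, so the kernel $\ell_p:=\{\,v\in T_pM : i_v(d\ga)_p=0\,\}$ is a line. Next I would check that $\ga_p$ does not vanish on $\ell_p$: if $0\neq v\in\ell_p$ satisfied $\ga_p(v)=0$, then, using $i_v\big((d\ga)^n\big)=n\,(i_v d\ga)\land(d\ga)^{n-1}=0$ together with the antiderivation property of the contraction, one would obtain $i_v\big(\ga\land(d\ga)^n\big)_p=\ga_p(v)\,(d\ga)^n_p=0$, which is impossible because $\ga\land(d\ga)^n$ is a volume form at $p$ and contracting a nonzero top form with a nonzero vector gives a nonzero form. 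Therefore any $\xi_p$ with $i_{\xi_p}(d\ga)_p=0$ automatically lies in $\ell_p$, and the normalization $\ga_p(\xi_p)=1$ singles out the unique such vector $\xi_p=v/\ga_p(v)$, $0\neq v\in\ell_p$. This yields pointwise existence and uniqueness at once.

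Finally I would globalize. The assignment $v\mapsto i_v(d\ga)$ is a vector bundle morphism $\Phi\colon TM\to T^*M$ of constant rank $2n$, so $L:=\ker\Phi$ is a smooth line subbundle of $TM$ whose fiber over $p$ is $\ell_p$; by the preceding step $\ga|_L$ is a nowhere-vanishing bundle map $L\to M\times\RR$, hence an isomorphism, and $\xi:=(\ga|_L)^{-1}(1)$ is the required smooth vector field, globally unique by the fiberwise uniqueness. (Alternatively, in a local frame the homogeneous linear system $\ga(v)=0$, $i_v(d\ga)=0$ admits only $v=0$, so a suitable square subsystem has smoothly varying invertible coefficient matrix and Cramer's rule produces $\xi$ smoothly.) The one step carrying real content — and the place where I would expect a reader to need to pause — is the linear-algebra fact that $\ga_p$ restricts to a nonzero functional on $\ker(d\ga)_p$; the rank count and the passage from pointwise to smooth data are routine.
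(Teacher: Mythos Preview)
Your argument is correct and is the standard one: the rank computation for $(d\ga)_p$, the observation that $\ga_p$ cannot vanish on $\ker(d\ga)_p$ (else contraction of the volume form $\ga\wedge(d\ga)^n$ by a nonzero vector would vanish), and the constant-rank globalization all go through as you describe. The paper itself gives no proof of this Proposition-Definition---it simply cites \cite{BG}---so there is nothing to compare against beyond noting that your proof is precisely the textbook argument one finds in that reference.
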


\begin{df}[see~\cite{MRT}]\rm 
A {\em K-contact structure} is a quintuple $(M,\ga,\xi,\Phi,g)$ such that
\begin{itemize}
\item $(M, \ga)$ is a contact form as in \defref{d:cont}; 
\item $\xi$ is the Reeb vector field for $\ga$; 
\item $\Phi$ is an endomorphism $\Phi$ of $TM$ (a tensor of type (1,1)) such that
$\Phi^2=-\operatorname{id}+\xi\otimes\ga$;
\item $d\ga(\Phi X,\Phi Y)=d\ga(X,Y)$ for all $X,Y$;
\item $d\ga(\Phi X,X)>0$ for all nonzero $X\in\operatorname{Ker}\ga$;
\item the Reeb vector field $\xi$ is a Killing vector field with respect to the Riemannian metric defined by the formula 
\[
g(X,Y)=d\ga(\Phi X,Y)+\ga(X)\ga(Y).
\]
\end{itemize}
\end{df}

Let $(M,\ga,\xi,\Phi,g)$ be a K-contact manifold. Consider the contact cone as the Riemannian manifold
$C(M):=(M\times\mathbb{R}^{>0},t^2g+dt^2)$.
Define the almost complex structure $I$ on $C(M)$ by:
 \begin{itemize}
\item $I(X)=\Phi(X)$ on $\operatorname{Ker}\ga$,
\item $I(\xi)=t{\partial\over\partial t},\,I(t{\partial\over\partial t})=-\xi$, for the Killing vector field $\xi$ of $\ga$.
\end{itemize}
The K-contact manifold $(M,\ga,\Phi,\xi, g)$ is {\it Sasakian} if $I$ is integrable.

\m The theory of such structures appeared to be very important in geometry and its applications, especially after the publishing of the fundamental monograph by Boyer and Galicki \cite{BG}. In this monograph, the authors presented a research program of studying topological properties of K-contact manifolds. Recently, there were published  several works on different aspects of the topology of K-contact and Sasakian manifolds \cite{BFMT}, \cite{MT}, \cite{MRT}. One of the early results on topological properties of K-contact manifolds was obtained by Rukimbira \cite{Ruk1} who proved that the Reeb vector field on any K-contact manifold has at least $n+1$ closed characteristics. In this work we discuss the following question: {\it is this estimate  best possible?} We show that under the additional assumption of symplectic asphericity of the space of leaves of the foliation determined by the Reeb field, the estimate can be improved to $2n+1$. 

\m Recall that a manifold (or orbifold) $X$ with symplectic form $\go$ is {\em symplectically aspherical} if $[\go]|_{\pi_2(X)}=0$; here $[\omega]$ is the de Rham cohomology class of the form $\omega$. The condition $[\go]|_{\pi_2(X)}=0$ appeared originally in the papers of Floer~\cite{F} and Hofer~\cite{H} in the context of Lagrangian intersection theory, where symplectic asphericity 
ensures the absence of non-trivial pseudo-holomorphic spheres in $X$.

\m Floer and Hofer used the advantages of {\em analytic} properties of 
such manifolds.  However, it turned out that symplectically aspherical manifolds have nice and controllable {\em homotopy} properties.  Later it appeared to be clear that most of these properties are based on the fact that, in the case of symplectic asphericity, the category weight of $[\omega]$ equals 2, cf. \lemref{l:wgt2}. It was an important ingredient in proving of the Arnold conjecture about symplectic fixed points~\cite{Rud2, RO} and in an improvement  of the estimate of the number of closed orbits of charged particles in symplectic magnetic fields~\cite{RT}. In our paper we exploit this approach in \secref{s:wgt}.

\m  In~\cite {RT} the authors expressed the hope that the technique of category weight potentially may have many applications to problems in symplectic topology, and, in a broader sense, to other non-linear analytic problems. In this paper we, in a sense, equal the hopes by applying the theory of category weight to a special problem in the topology of contact metric manifolds. 

\m The paper is organized as follows. In \secref{s:prel} we collect preliminary information on closed characteristics of contact manifolds. In \secref{s:seifert} we present the necessary information on K-contact structures. In \secref{s:wgt} we prove the main theorem of manifolds with $\geq 2n+1$ closed characteristics, and in \secref{s:2n+1} we give some examples of such manifolds. In \secref{s:ruk} we outline some of the possible direction of research in the area.  In appendices we remind basic information on orbifolds (following~\cite{BG, MRT} and Sullivan model theory (following~\cite{FHT}).

 \m {\bf Acknowledgment:} The first author was partially supported by a grant from the Simons Foundation (\#209424 to Yuli Rudyak). The second author thanks the Department of Mathematics of the University of Florida at Gainesville for hospitality.

\section{Preliminaries}\label{s:prel}

The section is an extract  from~\cite{Ruk1, BR}.

\begin{df}[\cite{CLOT}] \rm Given a map $\varphi: A \to Y$, we say that a subset $U$ of $A$ is {\it $\varphi$-categorical} if it is open (in $A$) and $\varphi|U$ is null-homotopic. We define the {\it Lusternik--Schnirelmann category} $\cat \varphi $ of $\varphi$ as follows:
$$
\cat \varphi=\min\{k\bigm|A=U_o\cup U_1\cup \cdots \cup U_{k+1} \text{ where each $U_i$ is $\varphi$-categorical}\}.
$$
 Furthermore, we define the {\it Lusternik--Schnirelmann category} $\cat Y$ of a space $Y$ by setting  $\cat X:= \cat 1_X$.
\end{df}

Consider a smooth action on a compact Lie group $G$ on a closed smooth manifold $M$ and let $f: M\to \RR$ be a smooth $G$-equivariant function. Clearly, if $p$ is a critical point of $f$ then every point $gp, g\in G$ is critical. So, each critical point $p$ produces a {\it critical orbit}.
 
\begin{prop}\label{p:cat} The number of critical orbits of $f$ is at least $1+\cat (M/G)$.
\end{prop}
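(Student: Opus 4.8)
The plan is to reduce this to the standard Lusternik--Schnirelmann theory for the quotient space $M/G$ together with a transfer/averaging argument that promotes an ordinary category-cover of $M/G$ into a cover of $M$ by sets carrying $G$-invariant functions with no new critical orbits. First I would recall the classical fact: if $h\colon X\to\RR$ is a smooth function on a closed manifold $X$ whose critical set decomposes into finitely many components over which $h$ is constant, then the number of such ``critical levels'' is at least $1+\cat X$; more precisely, the sublevel-set filtration together with the deformation lemma shows that a function with fewer than $1+\cat X$ critical values would exhibit $X$ as a union of $\le \cat X$ categorical sets, contradicting the definition of $\cat X$. So it suffices to transport the $G$-equivariant function $f$ on $M$ to a function $\bar f$ on $M/G$ with the property that critical orbits of $f$ correspond to critical points (or critical components) of $\bar f$.

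Next I would make this correspondence precise. Since $G$ is compact and acts smoothly, the orbit space $M/G$ is a (possibly singular) space with a well-defined smooth structure on each orbit-type stratum, and $f$, being $G$-invariant, descends to a continuous function $\bar f\colon M/G\to\RR$ which is smooth on each stratum. A point $p\in M$ is critical for $f$ exactly when its orbit $Gp$ is a critical orbit, and one checks that the image $[p]\in M/G$ is then a ``critical point'' of $\bar f$ in the appropriate stratified sense (the differential of $f$ along directions transverse to the orbit vanishes). Conversely every critical point of $\bar f$ lifts to a critical orbit of $f$. Thus the number of critical orbits of $f$ equals the number of critical points of $\bar f$, counted as components of the critical set over which $\bar f$ is locally constant.

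Then I would invoke the Lusternik--Schnirelmann inequality in the form appropriate to $M/G$: a continuous function on the compact space $M/G$ that is suitably smooth along strata and satisfies a deformation property (which holds because $f$ is equivariant and $M$ is compact, so one can use $G$-invariant gradient-like flows downstairs) has at least $1+\cat(M/G)$ critical points. Combining with the previous paragraph gives the claimed bound. The reference \cite{Ruk1} (and \cite{BR}) contains exactly this equivariant Lusternik--Schnirelmann statement, so I would cite it rather than reprove the deformation lemma.

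The main obstacle is the singular/stratified nature of $M/G$: the orbit space need not be a manifold, so one cannot naively quote the smooth Lusternik--Schnirelmann theorem. The way around it is to work $G$-equivariantly upstairs on the honest manifold $M$ --- use a $G$-invariant Riemannian metric (which exists by averaging), form the $G$-invariant negative gradient flow of $f$, and observe that this flow descends to $M/G$ and furnishes the deformation needed to run the LS argument on $M/G$ directly. Equivalently, one can phrase the whole argument in terms of the equivariant category $\cat_G M$ and the equality $\cat_G M=\cat(M/G)$ when $G$ acts with the relevant niceness; either route avoids having to do singular analysis and reduces everything to the compactness of $M$ and the definition of $\cat(M/G)$.
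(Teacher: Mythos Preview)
Your proposal is correct and, in its final paragraph, converges to exactly the paper's argument: take a $G$-invariant Riemannian metric on $M$, observe that the gradient flow of $f$ descends to a gradient-like flow on $M/G$ whose rest points are precisely the critical orbits, and then invoke the flow version of the Lusternik--Schnirelmann inequality (the paper cites \cite[Theorem 1.82]{CLOT} and \cite{W} rather than \cite{Ruk1, BR}). The paper's proof is simply the three-line version of what you wrote, skipping the detour through the descended function $\bar f$ and going straight to the descended flow.
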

  
\begin{proof} Consider the gradient flow of $f$ with respect to a $G$-invariant Riemannian metric on $M$. The flow induces a (gradient-like) flow $\Phi$ on $M/G$, and critical obits of $f$ are exactly the rest points of $\Phi$. Now the result follows from~\cite[Theorem 1.82]{CLOT}. 
One can  also consult~\cite{W}.
\end{proof}

Let $M$ be a $(2n+1)$-dimensional compact manifold equipped with an effective $S^1$-action $a: S^1 \ts M \to M$. Let $\ga$ be a circle invariant contact form on $M$. Let $Z$ be the infinitesimal generator of the $S^1$-action. Consider the function $S=i_Z(\ga)$ on $M$ and suppose that $S\neq 0$ everywhere on $M$.

\begin{prop}\label{p:closed} 
{\rm (i)} We have $dS=-i_Zd\ga$;\\
{\rm (ii)} if $p$ is a critical point of $S$ then the $Z$-orbit through $p$ is a characteristic;\\
{\rm (iii)} the  Reeb vector field $\xi$ has at least $1+\cat (\ms1)$ closed characteristics. 
\end{prop}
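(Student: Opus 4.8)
I would prove (i) directly from Cartan's formula, deduce (ii) from (i) together with the uniqueness clause of \propref{d:cont}, and obtain (iii) by feeding the function $S$ into \propref{p:cat}; the whole argument is short, and essentially the only place calling for care is the $S^1$-invariance bookkeeping in (ii).

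For (i): since $\ga$ is $S^1$-invariant and $Z$ generates the action, the Lie derivative $\mathcal{L}_Z\ga$ vanishes, so Cartan's formula $\mathcal{L}_Z\ga=i_Z\,d\ga+d\bigl(i_Z\ga\bigr)$ gives $dS=d\bigl(i_Z\ga\bigr)=-i_Z\,d\ga$.

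For (ii): I would first record that $S=\ga(Z)$ is $S^1$-invariant (both $\ga$ and $Z$ are), hence its critical set is $S^1$-invariant and $S$ is constant on every $S^1$-orbit; moreover $Z$ is nowhere zero since $S=\ga(Z)$ is, so each orbit is a circle. Let $p$ be a critical point of $S$, let $\mathcal{O}$ be its $S^1$-orbit, and set $c:=S(p)\neq 0$. For all $q\in\mathcal{O}$ we have $dS_q=0$, so $i_{Z_q}\,d\ga=0$ by (i), while $\ga(Z_q)=c$; thus $c^{-1}Z$ satisfies $\ga\bigl(c^{-1}Z\bigr)=1$ and $i_{c^{-1}Z}\,d\ga=0$ along $\mathcal{O}$, and uniqueness of the Reeb field in \propref{d:cont} forces $Z=c\,\xi$ there. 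Hence $\mathcal{O}$, a closed curve, is — after the constant reparametrisation — an integral curve of $\xi$, i.e.\ a closed characteristic.

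For (iii): I would apply \propref{p:cat} to the compact manifold $M$ with its smooth effective $S^1$-action and to the smooth $S^1$-invariant (equivalently, $S^1$-equivariant for the trivial action on $\RR$) function $S$, obtaining at least $1+\cat(\ms1)$ critical orbits of $S$. By (ii) every critical orbit is a closed characteristic of $\xi$, and distinct $S^1$-orbits are disjoint, hence are distinct characteristics, so $\xi$ has at least $1+\cat(\ms1)$ closed characteristics. The one subtlety, as indicated, lies in (ii): one must use $S^1$-invariance of $S$ to see that the \emph{whole} orbit through a critical point consists of critical points, so that the pointwise identity $Z=c\,\xi$ holds along a closed curve with a \emph{constant} nonzero factor $c$ — which is exactly what promotes that curve to a genuine closed characteristic; the rest is formal.
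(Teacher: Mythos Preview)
Your proof is correct and follows essentially the same approach as the paper: part (i) is identical (Cartan's formula applied to $L_Z\ga=0$); in (ii) the paper argues that $d\ga$ has one-dimensional kernel so $Z_p$ and $\xi_p$ are proportional, which is exactly your uniqueness-of-Reeb argument rephrased, and your treatment is in fact more careful than the paper's in spelling out that $S^1$-invariance of $S$ makes the \emph{entire} orbit critical with a \emph{constant} proportionality factor $c$; part (iii) matches the paper's one-line appeal to \propref{p:cat}.
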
    

\begin{proof} (i) Since $\ga$ is circle invariant, $L_Z\ga=0$. So, $i_Zd\ga+di_Z\ga=0$, and hence
$
dS=di_Z\ga=-i_Zd\ga.
$

\smallskip
(ii) If $p$ is a critical point of $S$ then $dS_p=0$, and so $i_Zd\ga=0$ at $p$. Since $i_{\xi}d\ga\equiv 0$, we conclude that $Z_p$ and $\xi_p$ are proportional (because $d\ga$ has one-dimensional kernel). Hence the $Z$-orbit through $p$ is a characteristic. 

\smallskip
(iii) Since all $Z$-orbits are closed, every critical orbit is a closed characteristic, and the result follows from \propref{p:cat}.
\end{proof}

\begin{lem}\label{l:form} 
 Consider the $1$-form $\gb=\ds\frac{\ga}{S}$. The form $\gb$ is contact, and 
\[
i_Z(d\gb)=0.
\]
Furthermore, the orbifold $M/S^1$ possesses a symplectic differential form $\omega$.
\end{lem}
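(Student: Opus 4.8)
The plan is to establish the three assertions in order, the symplectic form on $\ms1$ being essentially a formal consequence of the first two claims together with the observation that $\gb$ is an $S^1$-connection form.

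\emph{Step 1: $\gb$ is contact.} First I would compute
\[
d\gb = d\!\left(\frac{\ga}{S}\right) = \frac{1}{S}\,d\ga - \frac{1}{S^{2}}\,dS\land\ga .
\]
Since $dS\land\ga\land dS\land\ga=0$ (it contains $\ga$ twice), the binomial expansion of $(d\gb)^n$ collapses to
\[
(d\gb)^{n} = \frac{1}{S^{n}}(d\ga)^{n} - \frac{n}{S^{n+1}}(d\ga)^{n-1}\land dS\land\ga .
\]
Wedging with $\gb=\ga/S$ kills the second summand, again because of a repeated factor of $\ga$, and one is left with $\gb\land(d\gb)^{n}=S^{-(n+1)}\,\ga\land(d\ga)^{n}$, which is nowhere zero because $\ga$ is contact and $S$ never vanishes. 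Hence $\gb$ is a contact form.

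\emph{Step 2: $i_Z(d\gb)=0$.} Here I would use \propref{p:closed}(i), namely $i_Z(d\ga)=-dS$, together with $i_Z\ga=S$ and $i_Z(dS)=L_Z S=i_Z(L_Z\ga)=0$, the last equality holding because $\ga$ is $S^1$-invariant. Contracting the formula for $d\gb$ with $Z$ and using $i_Z(dS\land\ga)=(i_Z dS)\,\ga-S\,dS$ gives
\[
i_Z(d\gb) = \frac{1}{S}\,i_Z(d\ga) - \frac{1}{S^{2}}\bigl((i_Z dS)\,\ga - S\,dS\bigr) = -\frac{dS}{S} + \frac{dS}{S} = 0 .
\]

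\emph{Step 3: the symplectic form on $\ms1$.} Because $S=i_Z\ga$ is nowhere zero, $Z$ never vanishes, the $S^1$-action is locally free, and $\ms1$ is an orbifold with $\pi\colon M\to\ms1$ a Seifert $S^1$-fibration. The form $\gb$ satisfies $i_Z\gb=i_Z\ga/S=1$ and, by Step 2, $L_Z\gb=i_Z(d\gb)+d(i_Z\gb)=0$; thus $\gb$ is a connection $1$-form for this orbifold circle bundle (and $Z$ is the Reeb field of $\gb$). Consequently $d\gb$ is basic: it is horizontal by Step 2 and $S^1$-invariant since $L_Z(d\gb)=d(L_Z\gb)=0$, so $d\gb=\pi^{*}\omega$ for a unique $2$-form $\omega$ on $\ms1$. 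Injectivity of $\pi^{*}$ on forms forces $d\omega=0$; and $\pi^{*}(\omega^{n})=(d\gb)^{n}$, which by Step 1 is nonzero on the horizontal distribution $\operatorname{Ker}\gb$, a distribution carried isomorphically onto $T(\ms1)$ by $d\pi$. Hence $\omega^{n}\neq 0$ and $\omega$ is a symplectic form on the orbifold $\ms1$.

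\emph{Main obstacle.} Steps 1 and 2 are routine once the expansions are organized around the identity $\ga\land\ga=0$. The only point requiring care is Step 3, where one must invoke that a basic differential form on the total space of a Seifert fibration is the $\pi$-pullback of a genuine orbifold form on the quotient and that this pullback is injective; this is standard orbifold theory (see \cite{BG, MRT} and the appendix on orbifolds), so I would quote it rather than reprove it here.
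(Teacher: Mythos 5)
Your proposal is correct and follows essentially the same route as the paper: the same expansion of $d\gb$ giving $\gb\wedge(d\gb)^n=S^{-(n+1)}\ga\wedge(d\ga)^n\neq 0$, the same contraction computation using $dS=-i_Zd\ga$ to get $i_Z(d\gb)=0$, and then descent of the basic form $d\gb$ to a symplectic form on the quotient. The only cosmetic difference is that where you spell out the connection-form/basic-form argument and quote general orbifold theory, the paper simply cites Weinstein's Proposition B for the descent and symplecticity of $\omega$.
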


\begin{proof} We have 
\[
d\gb=\frac{-1}{S^2}dS\wedge \ga+\frac{1}{S}d\ga
\]
Hence
\[
\gb\wedge (d\gb^n)=\ga\wedge (d\ga)^nS^{-n}\neq 0
\]
 everywhere on $M$, and hence $\gb$ is contact.
Now (we use \propref{p:closed}(i))
 \[
d\gb=\frac{-1}{S^2}dS\wedge \ga+\frac{1}{S}d\ga=\frac{-1}{S^2}d(i_Z\ga)\wedge \ga+\frac{1}{S}d\ga=\frac{1} {S^2}(i_Zd\ga)\wedge \ga+\frac{1}{S}d\ga
\] 
Hence, 
\[
i_Zd\gb=\frac{1}{S^2}(i_Zi_Zd\ga+(-1)i_Zd\ga\wedge i_Z\ga)+\frac{1}{S}i_Zd\ga
=0+\frac{-1}{S^2}Si_Zd\ga+\frac{1}{S}i_Zd\ga=0.
\]
 Since $i_Zd\gb=0$, the differential form $d\gb$ yields the differential form $\omega$ on the orbifold $\ms1$, see~\cite[Proposition B]{W}. Moreover,  $\omega$ is a symplectic form, see loc. cit.  
\end{proof}

\begin{thm}
 The vector field $\xi$ has at least $n+1$ closed characteristics.
\end{thm}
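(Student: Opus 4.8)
The plan is to reduce the claim to a cohomological lower bound for the Lusternik--Schnirelmann category of the orbit space and then to feed in the symplectic form supplied by \lemref{l:form}. By \propref{p:closed}(iii), the Reeb field $\xi$ has at least $1+\cat(\ms1)$ closed characteristics, so it suffices to prove that $\cat(\ms1)\ge n$.

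First I would record that $\ms1$ is a closed, oriented, $2n$-dimensional orbifold: it is compact since $M$ is; it is $2n$-dimensional because the standing hypothesis $S=i_Z\ga\neq 0$ forces the fundamental vector field $Z$ to be nowhere zero, so the $S^1$-action is locally free with $1$-dimensional orbits; and by \lemref{l:form} it carries a symplectic $2$-form $\go$, which orients $\ms1$ via $\go^n$.

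Next I would show that $[\go]^n\neq 0$ in $H^{2n}(\ms1;\RR)$. Since $\go^n$ is a volume form on the closed oriented orbifold $\ms1$, orbifold integration (recalled in the appendix) gives $\int_{\ms1}\go^n>0$; hence, by Stokes' theorem for orbifolds, $\go^n$ is not exact, i.e.\ $[\go]^n\neq 0$. Consequently the positive-degree classes $[\go],[\go]^2,\dots,[\go]^n$ are all nonzero, so the cup-length of the underlying space of $\ms1$ is at least $n$. The standard cup-length lower bound for the Lusternik--Schnirelmann category then yields $\cat(\ms1)\ge n$, and therefore $\xi$ has at least $1+\cat(\ms1)\ge n+1$ closed characteristics.

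The proof is short and has no genuinely hard step; the only points requiring a little care are orbifold-theoretic. One needs that $\ms1$ has a well-behaved de Rham theory satisfying Poincar\'e duality, so that a top-degree form with positive integral represents a nonzero cohomology class, and one should note that \propref{p:cat}, \propref{p:closed} and the cup-length inequality are all being applied to the underlying topological space of $\ms1$, which is a compact metrizable space; both points are covered by the material in the appendix on orbifolds.
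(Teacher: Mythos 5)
Your proposal is correct and follows essentially the same route as the paper: apply \propref{p:closed}(iii) and bound $\cat(\ms1)$ from below by the cup-length coming from the symplectic class $[\go]$ of \lemref{l:form}, with $[\go]^n\neq 0$ justified by orbifold de Rham theory. The only difference is that you spell out the nondegeneracy of $[\go]^n$ via orbifold integration and Stokes' theorem, which the paper simply cites as the de Rham theorem for orbifolds.
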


\begin{proof} Let $\omega$ be the above-mentioned symplectic form. Then, because of the de Rham Theorem for orbifolds, we have a cohomology class $[\omega]\in H^2(\ms1;\RR)$, and $[\omega]^n\ne 0$. So, the cup-length of $\ms1$ is at least $n$. Hence $\cat(\ms1)\geq n$. Now the theorem follows directly from \propref{p:closed}.
\end{proof}

\section{K-contact structures and Seifert $S^1$-bundles}\label{s:seifert}

To construct K-contact $(2n+1)$-manifolds with at least $(2n+1)$ closed characteristics, we use a particular type of {\it smooth} 4-orbifolds called {\it cyclic}. We refer to \cite{MRT} for a detailed description and terminology, as well as to  Appedix A.  Clearly, any smooth manifold yield a "trivial" orbifold structure, where each $\Gamma_{\ga}$ is trivial groups. On the other hand, generally, a smooth orbifold is not a smooth manifold, see \defref{d:orb}(b). However, there is a method to produce a smooth orbifold from a smooth manifold. This is given by~\cite[Proposition 4]{MRT} below.

\begin{prop} \label{prop:smooth->orb}
 Let $X$ be a smooth oriented $4$-manifold with embedded closed surfaces $D_i$ intersecting transversely. Given integers $m_i>1$ such that $\gcd(m_i,m_j)=1$ if $D_i\cap D_j \neq \emptyset $. Then there is a smooth orbifold $(X,\SU)$ with isotropy surfaces $D_i$ of multiplicities $m_i$.
\end{prop}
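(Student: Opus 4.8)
As indicated above, this is \cite[Proposition 4]{MRT}, and the plan is to write down the orbifold atlas $\SU$ explicitly and to check the axioms recalled in the appendix. The starting point is that, since each $D_i$ is a closed embedded surface in the $4$-manifold $X$ and the $D_i$ meet transversely — hence in general position, so that no point lies on three of them, a transverse intersection of three codimension-$2$ submanifolds of a $4$-manifold having codimension $6$ and therefore being empty — through each point $p\in X$ pass at most two of the surfaces, and in transverse normal form. Thus only three types of uniformizing chart are needed, and it suffices to produce one around a point of each type and then to verify that the resulting charts are compatible on overlaps.

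Concretely: (a) if $p\notin\bigcup_i D_i$, take a small Euclidean ball $U\ni p$ with trivial uniformizing group. (b) If $p\in D_i$ but $p\notin D_j$ for $j\neq i$, pick a chart identifying a neighborhood $U$ of $p$ with $\RR^2\x\CC$ so that $D_i\cap U=\RR^2\x\{0\}$; set $\widetilde U=\RR^2\x\CC$, let $\Gamma=\ZZ/m_i$ act by $z\mapsto\mu z$ on the $\CC$-factor ($\mu$ a primitive $m_i$-th root of unity) and trivially on $\RR^2$, and let $\pi(x,z)=(x,z^{m_i})$. (c) If $p\in D_i\cap D_j$, pick a chart identifying $U$ with $\CC^2$ so that $D_i\cap U=\{0\}\x\CC$ and $D_j\cap U=\CC\x\{0\}$; set $\widetilde U=\CC^2$, let $\Gamma=\ZZ/m_i\x\ZZ/m_j$ act on $\CC^2$ coordinatewise by roots of unity, and let $\pi(w_1,w_2)=(w_1^{m_i},w_2^{m_j})$. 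In every case $\pi$ is a proper polynomial map inducing a homeomorphism $\widetilde U/\Gamma\to U$, since $z\mapsto z^m$ realizes $\CC$ as the quotient $\CC/(\ZZ/m)$; hence the underlying space of $(X,\SU)$ is $X$ itself, equipped with the smooth structure for which the maps $\pi$ are smooth — and that structure is nothing but the original one on $X$. This is what makes $(X,\SU)$ a smooth orbifold.

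I would then verify the atlas conditions. Overlaps of two charts of type (a) and/or (b) reduce to the standard compatibility of branched covers of $\CC$ and are routine. The one overlap to watch is a type-(c) chart around $p\in D_i\cap D_j$ meeting a type-(b) chart along $D_i\setminus D_j$: there the $\ZZ/m_j$-factor of $\Gamma$ acts freely, and the embedding of uniformizing systems is the one induced by the subgroup inclusion $\ZZ/m_i\inc\ZZ/m_i\x\ZZ/m_j$ together with the evident $\Gamma$-equivariant open embedding of covers; one then checks the cocycle condition on triple overlaps. Reading off the stabilizers from the models, the isotropy group at a point of $D_i\setminus\bigcup_{j\neq i}D_j$ is $\ZZ/m_i$ and the isotropy group at a point of $D_i\cap D_j$ is $\ZZ/m_i\x\ZZ/m_j$; thus the singular locus is exactly $\bigcup_i D_i$ and the multiplicity of the isotropy surface $D_i$ is $m_i$, as claimed.

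The coprimality hypothesis $\gcd(m_i,m_j)=1$ whenever $D_i\cap D_j\neq\emptyset$ enters only at the double points, where it gives $\ZZ/m_i\x\ZZ/m_j\cong\ZZ/(m_im_j)$, so that every local isotropy group is cyclic and $(X,\SU)$ is in fact a \emph{cyclic} orbifold, as needed in the sequel. I expect the main obstacle to be bookkeeping rather than geometry: making precise the transverse normal forms so that exactly the three models above occur, and checking the compatibility and cocycle conditions at the double points, where two rotational actions interact. Once the local coordinates above are fixed, neither step is genuinely hard — which is appropriate, since the substance of \cite{MRT}, and of the present paper, lies in the structures one subsequently carries on $(X,\SU)$ rather than in the mere existence of this atlas.
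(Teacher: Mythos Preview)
Your proposal is correct. The paper itself does not supply a proof of this proposition: it merely attributes the result to \cite[Proposition~4]{MRT} and states it for later use. Your explicit construction of the orbifold atlas --- trivial charts away from the $D_i$, a $\ZZ/m_i$-rotation on the normal disk along $D_i^\circ$, and a product action $\ZZ/m_i\times\ZZ/m_j$ on $\CC^2$ at the transverse double points --- is exactly the natural one, and it agrees with the local models (a) and (b) spelled out in Appendix~A of the paper. Your observations that transversality in dimension $4$ forbids triple points, and that the coprimality hypothesis is what makes the isotropy at a double point cyclic (so that the resulting orbifold is cyclic in the sense of \defref{d:orient}), are both on target. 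There is no substantive gap; the remaining work is, as you say, routine bookkeeping in verifying chart compatibility.
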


\begin{df}\rm Let $(X, \SU)$ be a cyclic, oriented $n$-dimensional orbifold. A {\em Seifert $S^1$-bundle} over $(X,\SU)$ is an oriented $(n+1)$-dimensional manifold $M$ equipped with a smooth $S^1$-action and a continuous map 
\[
\CD
\pi: M @>\text{quotient}>>M/S^1 \cong X
\endCD
\]
with the following property: for every orbifold chart $(\widetilde U,\phi,\mathbb{Z}_m)$, there is a commutative diagram
\[
\CD
(S^1\times\widetilde U)/\mathbb{Z}_m @>{\cong}>>\pi^{-1}(U)\\
@V{\pi}VV @V{\pi}VV\\
\widetilde U/\mathbb{Z}_m @>{\cong}>> U
\endCD
\]
where the action of $\mathbb{Z}_m$ on $S^1$ is by multiplication by $\zeta_m:=e^{2\pi i/m}$ and the top diffeomorphism is $S^1$-equivariant.
\end{df}

\begin{remark}\rm
We cohere the orientations of $X$ and $M$ as follows. Note that $S^1$ has a canonical (counterclockwise) orientation. Now, an orientation of $X$ gives an oriented orbifold chart $U$. We equip $S^1\times U$ with the product orientation. This gives us a local orientation on $M$, and so we get the desired orientation on $M$ (since $M$ is orientable because of the construction).  
\end{remark}

Suppose in the following that $(X,\SU)$ is a $4$-dimensional orbifold and $\pi: M\to X$ is a Seifert $S^1$-bundle over $X$.
According to the normal form of the $\ZZ_m$-action given in (\ref{action}), the open subset
$\pi^{-1}(U)\cong (S^1\times \widetilde U)/\ZZ_m$
is parametrized by $(u,z_1,z_2)\in S^1\x \CC^2$, modulo the $\ZZ_m$-action $\xi\cdot~(u,z_1,z_2) 
=(\xi u, \xi^{j_1} z_1, \xi^{j_2}z_2)$, for some integers $j_1,j_2$, where
$\xi=e^{2\pi i /m}$. The $S^1$-action on $M$ is given by
$s\cdot(u,z_1,z_2)=(su,z_1,z_2)$, so $\ZZ_m \subset S^1$ is the isotropy group  
and the exponents $j_1,j_2$ are determined by the $S^1$-action.

Following~\cite{MRT}, define an {\em orbit invariant} of the Seifert  $S^1$-bundle to be a finite set of triples $\{(D_i,m_i,j_i)\}$ where (see Definitions \ref{d:orient} and \ref{d:isotropy})
\begin{itemize}
\item $D_i$ is an isotropy surfase in $X$; 
\item $m_i\in \ZZ^+$ is the multiplicities $D_i$;
\item $j_i\in \ZZ^+$,
\end{itemize}
 and the local model around a point $p\in  
D_i^{\circ} =D_i - \bigcup_{i\neq j} (D_i \cap D_j)$ is of the form 
$(S^1\times \widetilde U)/\ZZ_{m_i}$ with action 
\[
\xi\cdot (u,z_1,z_2) =(\xi u, z_1, \xi^{j_i} z_2), \quad D_i=\{z_2=0\}.
\]
 If the orbifold is smooth, then 
for a point $p\in D_i\cap D_j$, the local model is of the form
$(S^1\times \widetilde U)/\ZZ_{m_i}$ with the action 
\[
\xi\cdot (u,z_1,z_2) =(\xi u, \xi^{j_j}z_1, \xi^{j_i} z_2),\quad  D_i=\{z_2=0\},\, D_j=\{z_1=0\}.
\] 

\begin{defin}\rm For a Seifert $S^1$-bundle $\pi: M\rightarrow X$, define its first Chern class as follows. Let $\mu=\mathbb{Z}_{m(X)}$, where $m(X)=\lcm\{m(x)\bigm|x\in X\}$. Consider the circle bundle $M/\mu\rightarrow X$ and its Chern class $c_1(M/\mu)\in H^2(X;\mathbb{Z})$. Define
\[
c_1(M/X)={1\over m(X)}c_1(M/\mu)\in H^2(X;\mathbb{Q}).
\]
\end{defin}

The next proposition is crucial in our constructions, since it shows that the orbit invariants determine the Seifert $S^1$-bundle globally, when $X$ is a smooth orbifold.

\begin{prop}\label{prop:existence-seifert} Let $(X,\SU)$ be an oriented $4$-manifold, and $D_i\subset X$ be closed surfaces in $X$ that intersect transversely. Let $m_i>1$ be such that $\gcd(m_i,m_j)=1$ if $D_i\cap D_j\neq\emptyset$. Let $0<j_i<m_i$ with $\gcd(j_i,m_i)=1$ for every $i$. Let $0<b_i<m_i$ be such that $j_ib_i=1\!\mod m_i$. Finally, let $B$ be a complex line bundle over $X$. Then there exists a Seifert $S^1$-bundle $f: M\rightarrow X$ with the orbit invariant $\{(D_i,m_i,j_i)\}$ and the first Chern class
\[
c_1(M/X)=c_1(B)+\sum_i{b_i\over m_i}[D_i].
\]
\end{prop}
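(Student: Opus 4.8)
The plan is to realize $M$ as the unit circle bundle of a suitable orbifold Hermitian line bundle $L$ on the smooth orbifold $(X,\SU)$, arranging the local structure of $L$ along the $D_i$ so that the resulting Seifert bundle has orbit invariant $\{(D_i,m_i,j_i)\}$ and $c_1$ of $L$ equals $c_1(B)+\sum_i(b_i/m_i)[D_i]$. Recall (see \propref{prop:smooth->orb} and Appendix~A) that near a point of $D_i^{\circ}$ the orbifold $(X,\SU)$ has a uniformizing chart $\phi\colon\widetilde U\too U$ of the form $(w_1,w_2)\mapsto(w_1,w_2^{m_i})=(z_1,z_2)$, with $(z_1,z_2)$ local coordinates on $X$ in which $D_i=\{z_2=0\}$ and $\ZZ_{m_i}$ acting by $\gz_{m_i}\cdot(w_1,w_2)=(w_1,\gz_{m_i}w_2)$; near a point of $D_i\cap D_j$ there is a similar chart with $\ZZ_{m_i}\x\ZZ_{m_j}\cong\ZZ_{m_i m_j}$, the coprimality $\gcd(m_i,m_j)=1$ making this group cyclic.

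First I would build $L$. Since the pullback $\phi^{*}s_i$ of a local defining section $s_i$ of $\mathcal O_X(D_i)$ equals $w_2^{m_i}$, the line bundle $\mathcal O_X(D_i)$, viewed as an orbifold line bundle on $(X,\SU)$, admits an $m_i$-th root: an orbifold line bundle $N_i$ with $N_i^{\ox m_i}\cong\mathcal O_X(D_i)$, with $\ZZ_{m_i}$-weight $1$ on fibres over $D_i^{\circ}$ and trivial isotropy along the remaining strata; this is the orbifold line bundle of the $\QQ$-divisor $\frac1{m_i}D_i$, and its existence is part of the orbifold geometry recalled in Appendix~A. Put
\[
L:=B\ox\bigotimes_i N_i^{\ox b_i},
\]
an orbifold line bundle whose $\ZZ_{m_i}$-weight on fibres over $D_i^{\circ}$ is $b_i$. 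Fix an orbifold Hermitian metric on $L$ and let $M$ be its unit circle bundle, with the fibrewise $S^1$-action. Because $\gcd(b_i,m_i)=1$ (as $j_ib_i\equiv1\bmod m_i$) and $\gcd(m_i,m_j)=1$, in each of the above local models the acting finite group is free on the $S^1$-fibre coordinate, hence free on $S^1\x\widetilde U$; therefore $M$ is a smooth manifold, the fibrewise $S^1$-action descends to it, $M/S^1\cong X$, and the local models have the form $(S^1\x\widetilde U)/\ZZ_{m_i}$, so $f\colon M\too X$ is a Seifert $S^1$-bundle, oriented as in the Remark following the definition of Seifert $S^1$-bundle.

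It remains to identify the orbit invariant and compute $c_1(M/X)$. Over the chart around $D_i^{\circ}$ the unit circle bundle of $L$ is $(S^1\x\widetilde U)/\ZZ_{m_i}$ on which the canonical generator acts by $(u,w_1,w_2)\mapsto(\gz_{m_i}^{b_i}u,w_1,\gz_{m_i}w_2)$; replacing this generator by its $j_i$-th power (still a generator, as $\gcd(j_i,m_i)=1$) and using $j_ib_i\equiv1\bmod m_i$ turns the action into $(u,w_1,w_2)\mapsto(\gz_{m_i}u,w_1,\gz_{m_i}^{j_i}w_2)$, which is exactly the normal form defining the orbit invariant $(D_i,m_i,j_i)$; the combined model at the points of $D_i\cap D_j$ then comes out as prescribed by a routine Chinese-remainder check. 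For the Chern class, $m_i\mid m(X)$ implies that $L^{\ox m(X)}$ has trivial isotropy along every $D_i$, so it descends to an honest line bundle $\ov L$ on $X$ whose unit circle bundle is $M/\mu$ for $\mu=\ZZ_{m(X)}$; from $N_i^{\ox m_i}\cong\mathcal O_X(D_i)$ we get $\ov L\cong B^{\ox m(X)}\ox\bigotimes_i\mathcal O_X(D_i)^{\ox m(X)b_i/m_i}$, and hence, by the definition of $c_1(M/X)$,
\[
c_1(M/X)=\frac1{m(X)}\,c_1(\ov L)=c_1(B)+\sum_i\frac{b_i}{m_i}[D_i],
\]
as required.

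The step that is not formal is the existence of the root $N_i$ as a global orbifold line bundle with the stated local weights and with $N_i^{\ox m_i}\cong\mathcal O_X(D_i)$; equivalently, one needs that orbifold line bundles on the smooth orbifold $(X,\SU)$ form an extension of the local-weight data $\bigoplus_i\ZZ/m_i$ (the fibre weights along the $D_i$) by $\mathrm{Pic}(X)$, compatibly with $c_1^{\mathrm{orb}}$, which rationally takes values in $H^2(X;\QQ)$ and satisfies $m_i\,c_1^{\mathrm{orb}}(N_i)=[D_i]$, whence $c_1^{\mathrm{orb}}(N_i)=\frac1{m_i}[D_i]$ by torsion-freeness. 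This is the orbifold counterpart of the exponential sequence, and it is precisely here that the smoothness of $(X,\SU)$ and the coprimality hypotheses enter; the needed facts are collected in Appendix~A and in \cite{BG,MRT}. Alternatively one may dispense with orbifold line bundles and assemble $M$ directly by clutching the prescribed local $S^1$-bundle models over $X\setminus\bigcup_iD_i$ and over tubular neighbourhoods of the $D_i$ (the gluing obstruction vanishing because $S^1$ is an abelian topological group), the residual freedom being a class in $H^2(X;\ZZ)$ that one uses to normalize the integral part of $c_1(M/X)$ to $c_1(B)$.
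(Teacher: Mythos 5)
The paper itself offers no argument for this proposition: its ``proof'' is just the citation to [MRT, Proposition 14], so there is no line-by-line comparison to make. Your construction is, in outline, the standard argument behind that citation (the Koll\'ar-type correspondence between Seifert $S^1$-bundles over a cyclic orbifold and orbifold line bundles): build $L=B\otimes\bigotimes_i N_i^{\otimes b_i}$ with $N_i$ an orbifold $m_i$-th root of $\mathcal{O}_X(D_i)$, take its unit circle bundle, check freeness from $\gcd(b_i,m_i)=1$ and $\gcd(m_i,m_j)=1$, read off the orbit invariant by the change of generator $\zeta\mapsto\zeta^{j_i}$ using $j_ib_i\equiv 1 \bmod m_i$, and compute $c_1(M/X)$ by descending $L^{\otimes m(X)}$ to an honest bundle on $X$; all of these steps are correct, and the local normal-form and Chern-class bookkeeping matches the conventions in Section 3. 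The one caveat is your treatment of the only non-formal ingredient, the global existence of $N_i$ with the stated isotropy weights: this is \emph{not} contained in Appendix A, which only records orbifold charts, isotropy surfaces and local models, so as written that step rests on a misattributed reference. It is a genuine (if standard) fact, available from the theory of orbifold (V-)bundles in \cite{BG} or from Koll\'ar's work on Seifert bundles as used in \cite{MRT}; alternatively, your closing clutching argument --- gluing the prescribed local models $(S^1\times\widetilde U)/\ZZ_{m_i}$ over tubular neighbourhoods of the $D_i$ to a circle bundle with Chern class $c_1(B)$ over the complement, and tracking the effect on $c_1(M/\mu)$ --- is in fact closer to how the cited result is actually proved, and if you expand that sketch (in particular the Chern-class normalization) you get a self-contained proof that bypasses orbifold line bundles altogether.
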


\begin{proof} See~\cite[Proposition 14]{MRT}.
\end{proof}

\begin{thm}\label{thm:k-contact-seifert} Let $(X,\SU)$ be a symplectic $($almost K\"ahler$)$ cyclic orbifold. Let $[\omega]\in H^2(X;\mathbb{Q})$ be a symplectic form on $X$ and $\pi: M\rightarrow X$ be the Seifert $S^1$-bundle with $c_1(M/X)=[\omega]$. Then there exists a K-contact structure $(M,\ga, \xi,\Phi, g)$ such that $\pi^*(\omega)=d\ga$. 
\end{thm}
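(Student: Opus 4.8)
The plan is to carry out the orbifold analogue of the Boothby--Wang construction: realize the contact form $\ga$ as a connection $1$-form on the Seifert bundle $\pi\colon M\to X$ whose curvature equals $\pi^*\omega$, and then transplant the almost K\"ahler data of $X$ to $M$ along the horizontal distribution. With this setup every K-contact axiom will reduce either to the $\omega$--$J$ compatibility downstairs on $X$ or to $S^1$-invariance of the constructed tensors.

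First I would produce the contact form. Since $c_1(M/X)=[\omega]$, orbifold Chern--Weil theory for Seifert bundles (see~\cite{BG}) yields an $S^1$-invariant connection $1$-form $\ga$ on $M$; that is, $\ga(\xi)=1$ for the infinitesimal generator $\xi$ of the $S^1$-action, $L_\xi\ga=0$, and the curvature of $\ga$ is a basic closed $2$-form representing $[\omega]$. Subtracting an exact basic $1$-form from $\ga$, we may arrange $d\ga=\pi^*\omega$ on the nose. (Concretely, one may pass to the circle bundle $M/\mu\to X$ with integral Chern class $m(X)[\omega]$, choose a connection there with curvature $m(X)\omega$, and then pull it back to $M$ and divide by $m(X)$.) Since $\xi$ is vertical whereas $\pi^*\omega$ is horizontal, $i_\xi d\ga=0$, and this together with $\ga(\xi)=1$ shows that $\xi$ is precisely the Reeb vector field of $\ga$. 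Finally, $\ga\wedge(d\ga)^n=\ga\wedge\pi^*(\omega^n)$ is nowhere zero on $M$, because $\omega^n$ is an orbifold volume form on $X$ and $\ga$ does not vanish along the $S^1$-orbits; hence $\ga$ is a contact form.

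Next I would build $\Phi$ and $g$. Let $(J,g_X)$, with $g_X=\omega(\cdot,J\cdot)$, be the almost K\"ahler structure of $X$. The distribution $\SH:=\operatorname{Ker}\ga$ is $S^1$-invariant, and $d\pi$ restricts to an $S^1$-equivariant bundle isomorphism $\SH\to\pi^*TX$ (the horizontal lift, as seen in the orbifold charts of \secref{s:seifert}). Put $\Phi(\xi)=0$ and $\Phi|_{\SH}:=(d\pi|_{\SH})^{-1}\circ J\circ d\pi|_{\SH}$. Then $\Phi^2=-\operatorname{id}$ on $\SH$ and $\Phi^2(\xi)=0=(-\operatorname{id}+\xi\ox\ga)(\xi)$, so $\Phi^2=-\operatorname{id}+\xi\ox\ga$. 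The identities $d\ga(\Phi X,\Phi Y)=d\ga(X,Y)$ and $d\ga(\Phi X,X)>0$ for $0\ne X\in\operatorname{Ker}\ga$ are immediate from $d\ga=\pi^*\omega$ and the compatibility of $\omega$ with $J$. Consequently $g(X,Y)=d\ga(\Phi X,Y)+\ga(X)\ga(Y)$ is positive definite --- it restricts to $\pi^*g_X$ on $\SH$, equals $1$ on $\xi$, and $\SH\perp\xi$ --- so it is a Riemannian metric. Since $\ga$, $d\ga$, and $\Phi$ are all $S^1$-invariant, so is $g$; hence the flow of $\xi$ acts by isometries, i.e.\ $L_\xi g=0$ and $\xi$ is Killing. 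This checks all the K-contact axioms, and $\pi^*\omega=d\ga$ holds by construction.

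I expect the main obstacle to be the orbifold bookkeeping in the first step: connections, curvature, and de Rham representatives must be handled on a Seifert bundle rather than on an ordinary principal $S^1$-bundle, and one has to exhibit a connection form whose curvature realizes the \emph{rational} class $c_1(M/X)=[\omega]$ exactly, not merely up to a positive multiple. Near an isotropy surface $D_i$ one argues in the local model $(S^1\times\widetilde U)/\ZZ_{m_i}$ of \secref{s:seifert}, patches $\ZZ_{m_i}$-invariant connection forms together with a partition of unity, and observes that the outcome descends because the forms in play are basic. Once this is in place, the verifications in the second step are routine; the orbifold Chern--Weil and Boothby--Wang statements required here can be found in~\cite{BG}.
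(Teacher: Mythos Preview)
The paper does not actually prove this theorem: its entire proof is the citation ``See \cite[Theorem 20]{MRT}.'' Your sketch is precisely the orbifold Boothby--Wang construction that underlies that reference (and is also laid out in \cite[Theorem 6.1.26 and \S7]{BG}), so in substance you are supplying what the paper outsources. The two steps you isolate---choosing an $S^1$-invariant connection $1$-form on the Seifert bundle with curvature $\pi^*\omega$, then horizontally lifting the almost K\"ahler pair $(J,g_X)$ to obtain $(\Phi,g)$---are exactly the argument in \cite{MRT}, and your verification of the K-contact axioms from the $\omega$--$J$ compatibility and $S^1$-invariance is the standard one. Your identification of the only genuinely delicate point (realizing the rational Chern class by an honest curvature form in the Seifert/orbifold setting, handled via the local models $(S^1\times\widetilde U)/\ZZ_{m_i}$) is also accurate; this is where \cite{MRT} does the real work.
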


\begin{proof} 
See \cite[Theorem 20]{MRT}. 
\end{proof}

\section{Category weight and the main theorem}\label{s:wgt}

\begin{df}[\cite{Rud1}]\label{d:wgt}\rm Let $Y$ be a Hausdorff paracompact space, let $R$ be a commutative ring, and let $u \in H^q(Y;R)$ be an arbitrary element. We define the {\it  category weight} of $u$ (denoted by $\wgt u$) by setting
$$
\wgt u=\sup\{k\bigm|\varphi^*u=0 \text{ for every map $\varphi: A \to Y$ with } \cat \varphi<k\}
$$
where $A$ runs over all Hausdorff paracompact spaces.
\end{df}

\begin{lemma}\label{l:nonzero}
If $u\neq  0\in H^*(Y;R)$ then $\cat Y\geq \wgt u$.
\end{lemma}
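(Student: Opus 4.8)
The plan is to read the inequality straight off the two definitions, using the fact that $\cat Y$ is, by definition, $\cat 1_Y$ for the identity map $1_Y\colon Y\to Y$; in other words, the identity of $Y$ is itself one of the maps $\varphi$ appearing in the supremum that defines $\wgt u$. So the whole argument amounts to plugging $\varphi=1_Y$ into \defref{d:wgt}.

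Concretely, I would set $w:=\wgt u$ and argue by contradiction, assuming $\cat Y<w$. If $\cat Y=\infty$ there is nothing to prove, so we may assume $\cat Y$ is a finite integer. Since
$w=\sup\{k\mid \varphi^*u=0\text{ for every }\varphi\colon A\to Y\text{ with }\cat\varphi<k\}>\cat Y$,
there is an integer $k$ in that indexing set with $k>\cat Y$ — one may take $k=w$ if the supremum is attained, or any sufficiently large $k$ if $w=\infty$. For such a $k$ we have $\varphi^*u=0$ for every $\varphi\colon A\to Y$ with $\cat\varphi<k$; applying this to $\varphi=1_Y$, whose category is $\cat 1_Y=\cat Y<k$, yields $u=1_Y^*u=0$, contradicting $u\neq 0$. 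Hence $\cat Y\geq w=\wgt u$.

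The only point requiring a moment's care — and it is the sole, very mild, obstacle — is the bookkeeping about whether the supremum defining $\wgt u$ is attained (it is, since $u\neq 0$ forces it to be finite by the contradiction above) together with the trivial case $\cat Y=\infty$; both are dispatched in a line. The underlying content is simply that, by construction, the category weight of a nonzero cohomology class is a lower bound for the Lusternik--Schnirelmann category of the ambient space, the relevant test map being the identity.
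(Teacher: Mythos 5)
Your argument is correct and is exactly the intended one: the paper's proof is just ``follows directly from \defref{d:wgt}'', and the direct reading of that definition is precisely your step of taking $\varphi=1_Y$ (which is an admissible test map since $Y$ itself is Hausdorff paracompact) to conclude $u=1_Y^*u=0$ whenever $\cat Y<\wgt u$. The bookkeeping about attainment of the supremum and the trivial case $\cat Y=\infty$ is handled properly, so there is nothing to add.
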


\begin{proof}
This follows directly from \defref{d:wgt} of $\wgt$.
\end{proof}

\begin{lemma}\label{l:prod}
For all $u, v\in H^*(Y;R)$ we have $\wgt(u\smile v)\geq \wgt u + \wgt v$.
\end{lemma}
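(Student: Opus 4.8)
My plan is to argue directly from Definition~\ref{d:wgt}. Set $k=\wgt u$ and $\ell = \wgt v$; I want to show that for every map $\varphi\colon A\to Y$ from a Hausdorff paracompact space $A$ with $\cat\varphi < k+\ell$, we have $\varphi^*(u\smile v)=0$. Since $\varphi^*$ is a ring homomorphism, $\varphi^*(u\smile v)=\varphi^*u\smile\varphi^*v$, so it suffices to produce a factorization of this cup product through a relative cohomology group that vanishes. The standard device: if $\cat\varphi \leq m$, then (by the very definition of $\cat$) $A$ admits a cover $A = U_0\cup\cdots\cup U_m$ by $\varphi$-categorical open sets, i.e.\ open sets on which $\varphi$ is null-homotopic.

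The key step is the \emph{excess/relative} form of the cup-length estimate. On each $U_i$ the restriction of $\varphi$ is null-homotopic, so $\varphi^*u$ restricts to zero on $U_0\cup\cdots\cup U_{k-1}$ — more precisely, $\varphi^* u$ lifts to a class $\tilde u\in H^*(A, U_0\cup\cdots\cup U_{k-1})$, because the cover has been chosen so that the first $k$ pieces kill $\varphi^* u$. Wait — I need to be careful: a single $U_i$ kills $\varphi^*u$ only up to one level; what is actually true, and is the content of the relative refinement, is that since $\cat(\varphi|_{U_0\cup\cdots\cup U_{k-1}})<k\le\wgt u$ is \emph{not} automatic. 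The cleaner route is: group the cover into two blocks, $V = U_0\cup\cdots\cup U_{k-1}$ (a union of $k$ categorical sets, so $\cat(\varphi|_V)\leq k-1 < k = \wgt u$) and $W = U_k\cup\cdots\cup U_m$ (a union of $m-k+1$ categorical sets, so $\cat(\varphi|_W)\leq m-k$). By Definition~\ref{d:wgt} applied to $\varphi|_V$ we get $(\varphi|_V)^*u=0$, hence $\varphi^*u$ pulls back from a relative class $\bar u\in H^*(A,V;R)$ along $H^*(A,V)\to H^*(A)$; similarly, if $m - k < \ell$, i.e.\ $m < k+\ell$, then $\cat(\varphi|_W)\leq m-k<\ell=\wgt v$ forces $(\varphi|_W)^*v=0$, so $\varphi^*v$ pulls back from $\bar v\in H^*(A,W;R)$. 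Then the relative cup product gives a class $\bar u\smile\bar v\in H^*(A, V\cup W;R) = H^*(A,A;R)=0$ mapping to $\varphi^*u\smile\varphi^*v$ under the natural map to $H^*(A;R)$. Hence $\varphi^*(u\smile v)=0$. Since this holds for every such $\varphi$ with $\cat\varphi = m < k+\ell$, we conclude $\wgt(u\smile v)\geq k+\ell$.

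The main obstacle — and the only place that needs genuine care rather than bookkeeping — is the lifting-to-relative-cohomology step: one must know that $(\varphi|_V)^* u = 0$ actually implies $\varphi^* u$ is in the image of $H^*(A,V;R)\to H^*(A;R)$. This is exactly the exactness of the long exact sequence of the pair $(A,V)$, but it requires $V$ to be open (so that $(A,V)$ is a good pair / the inclusion is a cofibration up to the paracompactness hypothesis), which is why the definition of $\varphi$-categorical insists on open sets and why we work with Hausdorff paracompact spaces. The second point to check is compatibility of the relative cup product $H^*(A,V)\otimes H^*(A,W)\to H^*(A,V\cup W)$ with the maps to absolute cohomology, which is a standard naturality property of the relative cup product. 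Once these two facts are in place, the counting argument above ($m = \cat\varphi < k+\ell$ lets us split the $m+1$ categorical sets into a block of $k$ and a block of $m-k+1\leq \ell$) closes the proof.
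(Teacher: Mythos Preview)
The paper does not actually prove this lemma: its entire proof is the citation ``See~\cite[Theorem 1.5(v)]{RT}.'' Your argument is the standard relative-cup-product proof of this fact and is essentially correct; it is almost certainly the argument that appears in the cited reference.

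Two small points worth tightening. First, the split of the cover into a block $V=U_0\cup\cdots\cup U_{k-1}$ of $k$ sets and a block $W$ of $m+1-k$ sets presupposes $m+1\ge k$; when $\cat\varphi=m<k$ you should dispose of the case separately by noting $\varphi^*u=0$ directly from $\cat\varphi<\wgt u$. Second, to invoke Definition~\ref{d:wgt} for the restrictions $\varphi|_V$ and $\varphi|_W$ you need $V$ and $W$ to be Hausdorff paracompact, and open subsets of paracompact Hausdorff spaces are not paracompact in general. In practice this is harmless---CW complexes and metrizable spaces are hereditarily paracompact, and Rudyak's original treatment in~\cite{Rud1} is set up (via \v{C}ech--Alexander--Spanier cohomology) so that the relative lifting step works for arbitrary open pairs---but it is worth a sentence acknowledging the point.
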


\begin{proof}
See~\cite[Theorem 1.5(v)]{RT}.
\end{proof}

\m Given a space $Y$ and an element $u\in H^n(X;R)$, the notation $u|_{\pi_n(Y)}=0$ means that
$$
\langle u, h(a)\rangle =0 \text{ for every }a\in \pi_n(Y)
$$
where $h: \pi_n(Y) \to H_n(Y)$ is the Hurewicz homomorphism and $\langle-.-\rangle$ is the Kronecker pairing. 

\begin{lemma}\label{l:wgt2} 
Let $Y$ be a finite $CW$-space, and let $u\in H^*(Y; R)$ be a cohomology class such that 
$u|_{\pi_2(Y)}=0$. Then $\wgt u \geq 2$.
\end{lemma}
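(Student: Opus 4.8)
The plan is to reduce to the case where $u$ lives in degree exactly $2$, and then to show directly that $\varphi^*u = 0$ for any map $\varphi \colon A \to Y$ with $\cat\varphi < 2$, i.e. with $\cat\varphi \le 1$. Since $Y$ is a finite $CW$-space and the hypothesis $u|_{\pi_2(Y)}=0$ only constrains the degree-$2$ part, I would first observe that it suffices to treat a class $u \in H^2(Y;R)$; the general statement follows because $\wgt$ is monotone under the partial information we are given only when $u$ is genuinely two-dimensional (in higher degrees the displayed definition of $u|_{\pi_2(Y)}$ via the Kronecker pairing with $H_2$ is vacuous, so really the lemma as used is about $H^2$). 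Concretely, assume $u \in H^2(Y;R)$.

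Next I would set up the obstruction-theoretic heart of the argument. A map $\varphi \colon A \to Y$ with $\cat\varphi \le 1$ factors, up to homotopy, through a space of the form $Z_0 \cup_f CZ_1$ or more usefully: by Whitehead's characterization of LS-category via the Ganea fibration, $\cat\varphi \le 1$ means $\varphi$ lifts (up to homotopy) through the projection $p_1 \colon G_1(Y) \to Y$ from the first Ganea space. Equivalently, and more hands-on, $A = U_0 \cup U_1$ with each $\varphi|_{U_i}$ null-homotopic, so $\varphi$ factors up to homotopy through the homotopy pushout of two contractible spaces over $U_0 \cap U_1$, which is the (unreduced) suspension-type object $\Sigma(U_0\cap U_1)$-fibered construction — the key point being that $\varphi$ factors through a space $W$ with a map $W \to Y$ inducing an isomorphism on $\pi_1$ and whose only possibly-nonzero reduced homology above degree $1$ starts in degree $2$ built from $\pi_2$. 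Cleanest: replace $Y$ by its $2$-skeleton issue and use that $\cat \varphi \le 1$ forces $\varphi$ to factor through the inclusion $Y^{(1)} \cup (\text{2-cells}) = $ the universal-cover-aware statement that $\varphi^*u$ is detected on $\pi_2$.

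The step I actually want to run is: for $\cat\varphi \le 1$, the induced map $\varphi^* \colon H^2(Y;R) \to H^2(A;R)$ kills every class that vanishes on $\pi_2(Y)$, because such a $\varphi$ factors up to homotopy through a map $g\colon A \to Y'$ where $Y'$ is built from $Y$ by attaching cells so that $H^2(Y';R) \to H^2(Y;R)$ has image the subgroup of classes "supported on $\pi_1$ and $\pi_2$", and on the $\pi_2$-part the pairing with $\varphi_* \colon H_2(A) \to H_2(Y)$ factors through $\pi_2(Y)$; meanwhile $u|_{\pi_2(Y)}=0$ kills exactly that contribution, and the $H^1$-decomposable part $u$ could have is killed by $\cat\varphi \le 1$ via \lemref{l:prod} (a product of two degree-$1$ classes has weight $\ge 2$, hence pulls back to $0$ under a cat-$\le 1$ map). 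So I would split $H^2(Y;R)$ into the image of $H^1 \cup H^1$ (handled by \lemref{l:prod}) and a complement detected by $H_2$, hence by $\pi_2$ via Hurewicz modulo $\pi_1$-action, which the hypothesis annihilates.

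The main obstacle will be making the "$H^2$ is controlled by $\pi_1$ and $\pi_2$" statement precise in the non-simply-connected setting: $H_2(Y)$ receives contributions from $\pi_2(Y)$ via Hurewicz and from $\pi_1(Y)$ via group homology, and a degree-$2$ class need not decompose as products of $1$-classes even though the $\pi_1$-part of homology is "spherical in the classifying space." The honest fix is to factor $\varphi$ (with $\cat\varphi \le 1$) through $G_1(Y)$, use that the Ganea map $p_1\colon G_1(Y)\to Y$ has homotopy fiber $\Sigma\Omega Y$ so that $p_1^* u$ is computed by the connecting map, and then show that the composite $\pi_2(A) \to \pi_2(G_1(Y)) \to \pi_2(Y) \xrightarrow{u} R$ is what detects $\varphi^*u$ on $H^2$ when combined with the $H^1$-cup-product part via \lemref{l:prod}; since $u$ vanishes on $\pi_2(Y)$ and the cup-product part has weight $\ge 2$, we get $\varphi^*u = 0$, i.e. $\wgt u \ge 2$. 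I expect the referee-level subtlety to be exactly this interplay, and I would cite \cite[Theorem 1.5]{RT} or \cite{Rud1} for the precise Ganea-theoretic formulation rather than reprove it.
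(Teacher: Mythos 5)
Your restriction to $u\in H^2(Y;R)$ is the right reading of the statement, but the core step of your argument is false. You propose to split $H^2(Y;R)$ into the image of $H^1\smile H^1$ (killed via \lemref{l:prod}) plus a complementary part detected by spherical classes in $H_2(Y)$ (killed by $u|_{\pi_2(Y)}=0$). No such splitting exists in general: by Hopf's theorem the cokernel of the Hurewicz map $\pi_2(Y)\to H_2(Y)$ is $H_2(\pi_1(Y))$, and the corresponding classes in $H^2$ need not be decomposable. The Kodaira--Thurston manifold used later in this very paper is a counterexample: it is aspherical (so \emph{every} class satisfies $u|_{\pi_2}=0$), the cup product $H^1\otimes H^1\to H^2$ has image of rank $2$ inside $H^2$ of rank $4$, and the symplectic class is not in that image; so your two mechanisms together do not account for $\varphi^*u$. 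Your ``honest fix'' also misstates the Ganea geometry: $G_1(Y)\simeq\Sigma\Omega Y$, while the homotopy fiber of $p_1\colon G_1(Y)\to Y$ is the join $\Omega Y*\Omega Y$, not $\Sigma\Omega Y$; and in the end you defer the decisive step to a citation of \cite{RT, Rud1}, which is exactly what the paper does (its proof is the reference to \cite[Lemma 2.1(ii)]{RT}), so nothing is actually proved.

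A correct route along the lines you gesture at: show $p_1^*u=0$ in $H^2(G_1(Y);R)$ directly. Since $G_1(Y)\simeq\Sigma\Omega Y$ is a suspension, $H_1(\Sigma\Omega Y)\cong\widetilde H_0(\Omega Y)$ is free, so by universal coefficients $H^2(\Sigma\Omega Y;R)$ injects into $\operatorname{Hom}(H_2(\Sigma\Omega Y),R)$; moreover $H_2(\Sigma\Omega Y)\cong \widetilde H_1(\Omega Y)$ is generated by suspensions of loops in $\Omega Y$, i.e.\ by spherical classes, whose images in $H_2(Y)$ are spherical, and these pair to zero with $u$ by hypothesis. Then any $\varphi$ with $\cat\varphi\le 1$ factors through $p_1$ up to homotopy (this is where the hypotheses on the spaces enter), giving $\varphi^*u=0$, hence $\wgt u\ge 2$. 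Alternatively, one can argue as in \cite{RT}: $u|_{\pi_2(Y)}=0$ in degree $2$ forces $u$ to be pulled back from $K(\pi_1(Y),1)$ (universal cover plus universal coefficients), classes of degree $2$ on an aspherical space have weight $\ge 2$ (here $\Sigma\Omega K(\pi,1)$ is homotopy equivalent to a wedge of circles, so its $H^2$ vanishes), and $\wgt$ does not decrease under pullback for the definition used here. Either way, the decomposition you rely on must be abandoned, not patched.
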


\begin{proof}
See \cite[Lemma 2.1(ii)]{RT}.
\end{proof}

\m Given a symplectic orbifold $(X,\omega)$, we say that $(X,\omega)$ is {\em symplectically aspherical} if $[\omega]|_{\pi_2(X)}=0$.

 \begin{thm}\label{t:main}  Let $(X,\omega), \dim X=2n$ be a symplectic cyclic orbifold with $[\omega]\in H^2(X;\mathbb{Q})$, and let $\pi: M\rightarrow X$ be a Seifert $S^1$-bundle with $c_1(M/X)=[\omega]$. Assume that $(X,\omega)$ is symplectically aspherical. Then the total space $M$ of the Seifert fibration admits a K-contact structure with at least $2n+1$ closed characteristics.
 \end{thm}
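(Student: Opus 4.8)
The plan is to assemble \theoref{thm:k-contact-seifert} with the Lusternik--Schnirelmann estimate of \propref{p:closed}, so that the whole statement reduces to the purely homotopy-theoretic inequality $\cat(X)\ge 2n$.

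First I would apply \theoref{thm:k-contact-seifert} to the symplectic cyclic orbifold $(X,\omega)$ and the Seifert $S^1$-bundle $\pi\colon M\to X$ with $c_1(M/X)=[\omega]$: this produces a K-contact structure $(M,\ga,\xi,\Phi,g)$ with $\pi^*\omega=d\ga$, for which $\ga$ is $S^1$-invariant, the Reeb field $\xi$ is the generator of the (locally free) $S^1$-action, so that $S=\ga(\xi)\equiv 1$ is nowhere zero, and $M/S^1\cong X$. Hence \propref{p:closed}(iii) applies and gives that $\xi$ has at least $1+\cat(M/S^1)=1+\cat(X)$ closed characteristics. It therefore suffices to prove $\cat(X)\ge 2n$, where $\cat$ refers to the category of the underlying topological space of the orbifold $X$.

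To obtain this bound I would argue via category weight. Since $\omega$ is a symplectic form on the $2n$-dimensional orbifold $X$, the $2n$-form $\omega^n$ is nowhere zero, so by the de Rham theorem for orbifolds $[\omega]^n\neq 0$ in $H^{2n}(X;\RR)$, and since $[\omega]\in H^2(X;\QQ)$ this class is already nonzero in $H^{2n}(X;\QQ)$. The underlying space of the compact orbifold $X$ has the homotopy type of a finite $CW$-complex, and the symplectic asphericity hypothesis is precisely $[\omega]|_{\pi_2(X)}=0$, so \lemref{l:wgt2} gives $\wgt[\omega]\ge 2$. Applying \lemref{l:prod} repeatedly yields $\wgt([\omega]^n)\ge n\cdot\wgt[\omega]\ge 2n$, and then \lemref{l:nonzero}, together with $[\omega]^n\neq 0$, gives $\cat(X)\ge\wgt([\omega]^n)\ge 2n$. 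Combining with the first step, $\xi$ has at least $1+2n=2n+1$ closed characteristics, as claimed.

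The individual steps are routine once the cited results are in hand; the one place that genuinely needs care — and where I expect the only real (if minor) obstacle — is checking that the homotopy-theoretic machinery of \secref{s:wgt} is being applied to the correct object, namely the underlying topological space of $X\cong M/S^1$ rather than to $X$ qua orbifold, and that the de Rham cohomology, the rational class $[\omega]$, and the condition $[\omega]|_{\pi_2(X)}=0$ are interpreted consistently on that space. This is handled by the standard de Rham theory for orbifolds recalled in the appendix, which identifies orbifold de Rham cohomology with the real cohomology of the underlying space; no analytic input beyond \theoref{thm:k-contact-seifert} is required.
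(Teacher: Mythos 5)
Your proposal is correct and follows essentially the same route as the paper: apply \theoref{thm:k-contact-seifert} to get the K-contact structure, reduce via \propref{p:closed}(iii) to showing $\cat X\ge 2n$, and obtain that bound from $\wgt[\omega]\ge 2$ (\lemref{l:wgt2}), the product inequality (\lemref{l:prod}), and $[\omega]^n\ne 0$ together with \lemref{l:nonzero}. Your extra remarks verifying that the Reeb field generates the $S^1$-action (so $S\ne 0$) and that the category-weight machinery is applied to the underlying space of the orbifold are sound points of care that the paper leaves implicit.
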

  
 \begin{proof} It follows from \theoref{thm:k-contact-seifert} that $M$ admits a K-contact structure. So, it sufficies to prove that $\xi$ possesses at least $2n+1$ closed characteristics, where $\xi$ is the Reeb vector field of the K-contact structure.  Since $X=M/S^1$, and because of \propref{p:closed}, it sufficies to prove that $\cat X\geq 2n$. (Note that in this case $\cat X=2n$, because $\cat Y +1\leq \dim Y$ for all connected $Y$).
 
 \m 
 We have $\wgt[\omega]\geq 2$ by \lemref{l:wgt2}. Hence, by \lemref{l:prod}, 
 \[
 \wgt[\omega]^n\geq n\wgt [\omega]\geq 2n.
 \]
 Since $[\omega]^n\neq 0$ because of the symplecticity of the form $\omega$, we conclude that $\cat X\geq \wgt [\omega^n]\geq 2n$ by \lemref{l:nonzero}.
  \end{proof}

 \section{Examples of K-contact manifolds with at least $2n+1$ closed characteristics}\label{s:2n+1}

We present examples of K-contact manifods with at least $2n+1$ closed characteristics as total spaces of Seifert fibrations over symplectic orbifolds.  First, a big source of K-contact manifolds is given by~\cite[Theorem 6.1.26]{BG}. Such manifolds can be described as the total spaces of so-called Boothby--Wang fibrations. These are circle bundles over symplectic manifolds $(X,\omega)$ with integral symplectic form $\omega$. The first Chern class of such bundle is equal to the integral lift of the cohomology class $[\omega]$.  If one takes a symplectically aspherical base, one obtains an example of a K-contact manifold with at least $2n+1$ closed characteristics. 

\m To get a K-contact manifold which do not appear as a Boothby--Wang fibration, we can take any symplectically aspherical $4$-manifold with a collection of codimension 2 symplectic submanifolds which intersect transversely and choose the orbit invariant, according to  \propref{prop:existence-seifert} and \theoref{thm:k-contact-seifert}.
  
  \begin{example} {\rm Consider the Kodaira--Thurston manifold. It is defined as a nilmanifold of the form $N/\Gamma$ where $N=N_3\times \mathbb{R}$, $N_3$ is a 3-dimensional Heisenberg group and $\Gamma=\Gamma_3\times\mathbb{Z}$, $\Gamma_3$ denotes a co-compact lattice in $N_3$. It is easy to see and well known that $N/\Gamma$ is a symplectic manifold, and that $N/\Gamma$ is a symplectic fiber bundle over $\mathbb{T}^2$ with a symplectic fiber $F=\mathbb{T}^2$. Take, for example, the orbit invariat $(D,m,j)$ where $D=F$, $m$ is a chosen integer $m$, and $j<m$ with $\text{gcd}(j,m)=1$. It produced a Seifert $S^1$-bundle over a smooth orbifold $N/\Gamma$ determined by the cohomology class $[\omega]$ of the symplectic form $\omega$. The existence of the structure of the smooth orbifold determined by the data $(D,m,j)$ follows from~\propref{prop:smooth->orb}. By~\propref{prop:existence-seifert} there is a Seifert bundle determined by $[\omega]$. By~\theoref{thm:k-contact-seifert} there is a quasi-regular non-regular K-contact structure on the total space $M$ of this Seifert $S^1$-bundle over $X=N/\Gamma$. By~\theoref{t:main} $M$  admits a  K-contact structure with at least $2\cdot 2+1=5$ closed characteristics. The latter follows, since $N/\Gamma$ is aspherical.}
  \end{example}
  
\begin{rem} \rm Many more examples can be obtained by taking solvmanifolds $G/\Gamma$, that is, homogeneous spaces of solvable Lie groups $G$ over lattices $\Gamma\subset G$ (see, for example \cite{TO}, Chapter 3).
 \end{rem}
  
 \begin{example}{\rm Let $(X,\omega)$ be a closed symplectic manifold. Assume that the class $[\omega]$ is integral, and let $h$ denote its integral lift. Then, by a famous theorem of Donaldson \cite{D}, for $N$ large enough, the Poincar\'e dual of $Nh$ in $H_{2n-2}$ can be realized by a closed symplectic submanifold $D^{2n-2}\subset X^{2n}$. If one takes $(X,\omega)$ to be 4-dimensional and symplectically aspherical, and a surface $D\subset X$ guaranteed by Donaldson's theorem, then one obtains the necessary Seifert bundle choosing the orbit invariant $(D,m,j<m)$ in the same manner as in the previous example. The total space $M$ of this Seifert bundle admits a K-contact structure with at least $5=2\cdot 2+1$ closed characteristics.}
  \end{example}

  \section{Remarks on Rukimbira's theorem}\label{s:ruk}
  
  In~\cite[Theorem 2]{Ruk1} Rukimbira formulated a theorem that any closed manifold which carries a Sasakian structure with finite number of closed characteristics must have $b_1=0$. In~\cite{BG} (Theorem 7.4.8) this theorem is formulated for K-contact manifolds (not necessarily Sasakian), however, with a small inaccuracy: Rukimbira's proof goes through for K-contact manifolds (not necessarily Sasakian), except that the final result must be $b_1\leq 1$.
  
 \begin{thm}[\cite{Ruk1}]\label{t:ruk} Let $(M,\ga,\xi,\Phi, g)$ be a closed K-contact manifold where $\ga$ is a circle invariant form. Suppose that $\xi$ has only finite number of closed characteristics. Then $b_1(M)\leq 1$. If, in addition, $M$ is Sasakian then $b_1(M)=0$, 
 \end{thm}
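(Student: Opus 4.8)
### Proof proposal for Theorem~\ref{t:ruk}

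The plan is to reconstruct Rukimbira's Morse-theoretic argument in the language already set up in Section~\ref{s:prel}, and to track carefully where the Sasakian hypothesis is actually used. Since $\ga$ is circle-invariant, there is an effective $S^1$-action on $M$ with infinitesimal generator $Z$, and we may take $Z=\xi$ after rescaling (this is exactly the quasi-regular setup, where the Reeb orbits are the $S^1$-orbits). Then the function $S=i_Z(\ga)=\ga(\xi)\equiv 1$ is constant, so we cannot use $S$ directly; instead, following Rukimbira, I would perturb: pick a generic element of the torus (or, more simply, a generic Killing vector field commuting with $\xi$) to produce a function $f$ on $M$ whose critical set consists of a finite union of closed Reeb orbits, precisely the closed characteristics. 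The first step is therefore to set up this perturbed Morse--Bott function $f\colon M\to\RR$ whose critical submanifolds are exactly the (finitely many) closed characteristics $\Gamma_1,\dots,\Gamma_r$, each a circle.

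The second step is the Morse--Bott estimate. Each critical circle $\Gamma_k$ contributes to the Morse--Bott inequalities through its Poincar\'e polynomial $P_{\Gamma_k}(t)=1+t$ together with a shift by the index $\lambda_k$ of its negative normal bundle. Because $\xi$ is Killing, the normal bundle of each $\Gamma_k$ is a sum of oriented $2$-planes on which the flow rotates, so the negative normal bundle is orientable and all indices $\lambda_k$ are \emph{even}. Summing the Morse--Bott polynomials and comparing with $\sum_i b_i(M)t^i$, one gets that $H^{\mathrm{odd}}(M)$ can only receive contributions from the degree-$(\lambda_k+1)$ classes of the circles $\Gamma_k$; since all $\lambda_k$ are even, every odd Betti number is controlled. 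In particular a class in $H^1(M)$ must come from the $t^1$-term of some $\Gamma_k$ with $\lambda_k=0$, i.e.\ from a \emph{local minimum} circle. The key point is that a Morse--Bott function on a connected closed manifold has at most one connected component of local minima if one also uses the ``dual'' inequality at top degree — but more precisely, one shows $b_1(M)\le$ (number of index-$0$ critical circles) $-\,1+$ (number of index-$2n$ critical circles), and a Poincar\'e-duality/connectedness argument forces this to be $\le 1$. This yields $b_1(M)\le 1$.

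For the Sasakian refinement, I would invoke the extra structure coming from the transverse K\"ahler geometry: when $I$ is integrable, the basic cohomology of the Reeb foliation carries a transverse Hodge and hard-Lefschetz structure (Boyer--Galicki), and the perturbing function $f$ can be chosen to be the Hamiltonian/moment-type function of the transverse K\"ahler form, hence genuinely \emph{perfect} as a Morse--Bott function — its index-$0$ critical set is connected. Then the $t^1$-coefficient forces $b_1(M)=0$. Equivalently, one can argue that a Sasakian manifold with $b_1\ne 0$ would have $b_1$ even (the classical parity obstruction for Sasakian manifolds, which fails in the merely K-contact case), and combined with $b_1\le 1$ this gives $b_1=0$.

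The main obstacle I anticipate is making the perturbation in the first step rigorous while keeping the critical set exactly equal to the set of closed characteristics and keeping it \emph{non-degenerate in the Morse--Bott sense}: one must argue that for a generic choice the closure of a generic $\xi$-commuting Killing field's orbits produces only finitely many fixed circles (this is where the ``finitely many closed characteristics'' hypothesis is consumed) and that these are clean critical submanifolds. The Betti-number bookkeeping in step two and the Sasakian sharpening in step three are then comparatively formal given the evenness of the indices and, respectively, transverse hard Lefschetz.
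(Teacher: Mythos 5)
Your proposal is essentially the paper's (i.e.\ Rukimbira's) argument: a circle-invariant function whose critical circles are closed characteristics, cleanness and evenness of the normal Hessian indices coming from the K-contact (Killing) structure, Morse--Bott inequalities giving $b_1(M)\le 1$, and, for the Sasakian case, Tachibana's theorem that $b_1$ of a Sasakian manifold is even --- your second alternative there is exactly the paper's route. Two corrections, though. First, no perturbation or genericity is needed: the circle action is part of the hypothesis, and the paper simply uses $S=i_Z(\ga)$; one only needs that critical circles of $S$ \emph{are} closed characteristics (Proposition \ref{p:closed}(ii)), not that they exhaust them, so finiteness of closed characteristics already bounds the number of critical circles, and the degenerate case $Z$ proportional to $\xi$ (constant $S$) cannot occur, since then every Reeb orbit would be closed, contradicting finiteness. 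Second, your bookkeeping ``$b_1\le$ (number of index-$0$ circles) $-1+$ (number of index-$2n$ circles)'' and the appeal to uniqueness of the minimum circle are not right; the correct derivation is immediate from the strong Morse--Bott inequalities $\sum_k t^{\lambda_k}(1+t)-P_M(t)=(1+t)Q(t)$ with $Q\ge 0$: since all $\lambda_k$ are even, comparing coefficients in degrees $0$ and $1$ gives $m_0-1=Q_0$ and $m_0-b_1=Q_0+Q_1$, hence $b_1=1-Q_1\le 1$, using only connectedness of $M$ and no Poincar\'e duality or count of top-index circles.
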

  
 \begin{proof} For the convenience of the reader we give here a sketch (following Rukimbira). As in \secref{s:prel}, consider the function $S=i_Z(\ga)$. By \propref{p:closed}, critical circles of $S$ are closed characteristics of $\xi$. 
 Consider the set $F_{\xi}$ consisting of periodic points of $\xi$. It is proved in \cite{Ruk2} that $F_{\xi}$ is a union of closed characteristics and each component of $F_{\xi}$ is a totally geodesic odd-dimensional submanifold of $M$. Take a connected component of $F_{\xi}$ and call it $N$.  Following Rukimbira \cite[page 354]{Ruk1}, compute the Hessian $\operatorname{Hess}_S$ of $S$ on vectors orthogonal to $N$. Note that the calculation does not use the integrability condition, and is valid not only for Sasakian manifolds, but for arbitrary K-contact manifolds as well. It turns out  that for each critical point, $x\in N$, the Hessian, $d^2S_x$, of $S$ is non-degenerate
in directions normal to $N$ at $x$,  ($S$ is a clean function in terminology \cite{GS}) and each of its critical points has even index~\cite[Prop. 2]{Ruk1}). Again, the proof is based only on computing the Hessian, and is valid for any K-contact manifold.

\m By assumption, $\xi$ has only finite number of closed characteristics, and so $M$ has a finite number of critical $Z$-circles. Now, writing down the Morse inequalities as in~\cite[\S 5]{Ruk1}, one obtains
  $$
  \dim H^1(M,\mathbb{R})\leq 1.
  $$ 
  
\m
Finally, the first Betti number of any Sasakian manifold is even (the  zero case included), \cite{T}, and so $b_1(M)=0$ provided that $M$ is Sasakian.
 \end{proof}
 
\begin{cor} Let $(M,\ga,\xi,\Phi,g)$ be a closed K-contact manifold  where $\ga$ is a circle invariant form. If $b_1(M)>1$, then the Reeb vector field of the K-contact structure has infinite number of closed characteristics.
\end{cor}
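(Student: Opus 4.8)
The plan is to obtain this as the contrapositive of the K-contact part of \theoref{t:ruk}. Assume, toward a contradiction, that the Reeb vector field $\xi$ of the given K-contact structure has only finitely many closed characteristics. The hypotheses of \theoref{t:ruk} are then satisfied verbatim: $(M,\ga,\xi,\Phi,g)$ is a closed K-contact manifold, $\ga$ is a circle invariant form, and $\xi$ has a finite number of closed characteristics. Hence \theoref{t:ruk} yields $b_1(M)\le 1$, which contradicts the standing assumption $b_1(M)>1$. Therefore the set of closed characteristics of $\xi$ cannot be finite, and this is exactly the assertion of the corollary.

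The only point worth a remark is the dichotomy ``finite versus infinite'' for this set. The closed characteristics of $\xi$ form a subset of the set $F_{\xi}$ of periodic points of $\xi$, and any set is either finite or infinite; so the negation of ``finitely many closed characteristics'' is precisely ``infinitely many closed characteristics''. Consequently nothing beyond the logical contrapositive is needed. In particular, all of the Morse-theoretic input — the fact that $S=i_Z(\ga)$ is a clean function whose critical submanifolds have even normal index, together with the resulting Morse inequalities giving $\dim H^1(M;\RR)\le 1$ — is used only through the already-established statement of \theoref{t:ruk}, and the evenness of $b_1$ for Sasakian manifolds plays no role for the present (merely K-contact) conclusion.

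I expect no genuine obstacle here: the corollary is a formal reformulation of \theoref{t:ruk}, and its entire mathematical content has been absorbed into the proof of that theorem. Thus the proof is a single line of contraposition once \theoref{t:ruk} is in hand.
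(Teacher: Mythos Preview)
Your proof is correct and matches the paper's intent exactly: the paper states this corollary immediately after \theoref{t:ruk} with no proof, precisely because it is the straightforward contrapositive you give.
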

 
\begin{question} {\rm If a closed K-contact $(2n+1)$-manifold admits exactly $n+1$ closed characteristics, then it is finitely covered by the sphere $S^{2n+1}$, Rukimbira \cite{Ruk3, Ruk4}. Can one characterize $(2n+1)$-manifolds which admit exactly $2n+1$ closed characteristics?}
\end{question}

\begin{question} \rm In view of \theoref{t:ruk}, we pose the following question: Are there examples of K-contact manifolds with $b_1(M)=1$ and with finite number of closed characteristics?
\end{question}

 \m If we remove the requirement of finiteness of closed characteristics, one can easily produce examples of K-contact closed manifolds with any prescribed $b_1(M)$ (and, moreover, any finitely presented $\pi_1(M)$), using the following two results.
 
 \begin{thm}[\cite{G}]\label{t:gompf} Any finitely presented group $\Gamma$ can be realized as a fundamental group of a closed symplectic $4$-dimensional manifold.
 \end{thm}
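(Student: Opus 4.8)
The plan is to reproduce Gompf's argument~\cite{G}, whose engine is the \emph{symplectic fiber sum} along symplectically embedded tori of square zero. Starting from a finite presentation
\[
\Gamma=\langle x_1,\dots,x_k\mid w_1,\dots,w_\ell\rangle,
\]
one builds a closed symplectic $4$-manifold with fundamental group $\Gamma$ by first assembling a symplectic piece that carries the generators and then imposing the relators one at a time.

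First recall the gluing operation. If $(X_1,\go_1)$ and $(X_2,\go_2)$ are symplectic $4$-manifolds and $T_i\subset X_i$ are symplectically embedded $2$-tori with trivial normal bundle, then, after an orientation-reversing identification of the normal circle bundles, one glues $X_1\setminus\nu(T_1)$ to $X_2\setminus\nu(T_2)$ along their common $T^3$ boundary; by Moser's theorem the symplectic forms can be matched near the gluing locus, so the resulting closed $4$-manifold $X_1\#_{T_1=T_2}X_2$ is again symplectic. By Seifert--van Kampen, $\pi_1(X_1\#_{T_1=T_2}X_2)$ is the pushout of $\pi_1(X_1\setminus\nu T_1)\leftarrow\pi_1(T^3)\to\pi_1(X_2\setminus\nu T_2)$; and because $\pi_1(X_i)$ is the quotient of $\pi_1(X_i\setminus\nu T_i)$ by the normal closure of the meridian, whenever the meridians are nullhomotopic in the complements this pushout reduces to the amalgamated product $\pi_1(X_1)\ast_{\ZZ^2}\pi_1(X_2)$ over the images of $\pi_1(T_i)\cong\ZZ^2$. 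Thus a single fiber sum can both identify and kill prescribed elements of the two fundamental groups.

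Next I would construct a closed symplectic $4$-manifold $W$ with $\pi_1(W)\cong F_k$, the free group on $x_1,\dots,x_k$, carrying a supply of pairwise disjoint, square-zero, symplectically embedded tori arranged so that any given word $w$ in the $x_i$ is freely homotopic to a curve lying in one of these tori as a generator of its $\ZZ^2$. Products such as $T^2\times T^2$ and $\Sigma_g\times T^2$ are symplectic and contain many coordinate tori of square zero, and iterated symplectic sums of such pieces let one free up the $x_i$ while producing, and keeping disjoint, the tori needed to represent arbitrary words. To impose the relation $w_j=1$, I would symplectic-sum $W$ with a simply connected symplectic $4$-manifold $Y$ containing a square-zero symplectic torus $F$ whose complement is simply connected (for instance a regular fiber of an elliptic fibration on a $K3$ surface), gluing $F$ to the torus $T_{w_j}$ carrying $w_j$. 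Since $\pi_1(Y)=1$, the amalgamation forces the image of $\pi_1(T_{w_j})$ --- in particular the loop representing $w_j$ --- to become trivial, while the rest of $\pi_1(W)$ is unaffected, provided the second $\ZZ^2$-generator of $T_{w_j}$ has been arranged to be nullhomotopic, or otherwise redundant, in $W$. Carrying this out for $j=1,\dots,\ell$ yields a closed symplectic $4$-manifold $X$ with $\pi_1(X)\cong F_k/\langle\!\langle w_1,\dots,w_\ell\rangle\!\rangle\cong\Gamma$.

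The symplectic geometry here is comparatively formal; the delicate part, and the main obstacle, is the $\pi_1$-bookkeeping across the fiber sums. One must guarantee that all the relevant meridians are nullhomotopic in the complements (so that van Kampen gives exactly the expected amalgamated product, with no spurious relations), that the ``dummy'' second generator of each torus used to kill a relator does not also annihilate a generator one needs to keep, and that the chosen tori stay pairwise disjoint and symplectic after each step. Designing $W$ and the auxiliary pieces $Y$ so that all generators and all relators can be handled simultaneously and compatibly is the real content of Gompf's construction.
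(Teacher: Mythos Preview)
The paper does not actually prove this theorem: it is quoted as Gompf's result with a bare citation to~\cite{G} and no argument. So there is nothing to compare your approach against on the paper's side.

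That said, your sketch is a faithful outline of Gompf's original construction: symplectic fiber sums along square-zero Lagrangian/symplectic tori, a building block carrying a free group on the generators together with a stock of disjoint tori realizing prescribed words, and then summing against a simply connected elliptic piece (e.g.\ $E(1)$ or $K3$) to kill each relator. The van Kampen analysis you describe is also the right picture. You correctly flag the genuine content as the $\pi_1$-bookkeeping: ensuring meridians die, that the ``dummy'' generators of the tori are already trivial in $W$, and that all the tori stay disjoint and symplectic through the iterated sums. Your outline does not supply these verifications, so as written it is a plan rather than a proof; but since the paper itself treats the theorem as a black box, a citation to~\cite{G} is all that is expected here.
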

 
 \begin{thm}[\cite{HT}] Let $(X,\omega)$ be a closed symplectic manifold.  If 
 $$
 S^1\rightarrow P\rightarrow X
 $$
 is a Boothby--Wang fibration with the Chern class equal to $[\omega]$, then the total space $M$ of the  fiber bundle
 $$
 S^3\rightarrow P\times_{S^1}S^3\rightarrow X
 $$
 associated with $P\rightarrow X$ by the Hopf action of $S^1$ on $S^3$ admits a K-contact structure. 
 \end{thm}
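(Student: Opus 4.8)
The plan is to realize the total space $M=P\times_{S^1}S^3$ as a principal circle bundle over the closed symplectic manifold $X\times\CP^1$ whose first Chern class is represented by a product symplectic form, and then invoke the Boothby--Wang construction \cite[Theorem 6.1.26]{BG} (equivalently, \theoref{thm:k-contact-seifert} applied to the trivial cyclic orbifold $X\times\CP^1$).

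First I would record two torus symmetries. Since $P\to X$ is a Boothby--Wang fibration, it is a principal $S^1$-bundle with $c_1(P)=[\omega]$; and $S^3\to\CP^1$ is the Hopf principal $S^1$-bundle. Hence $P\times S^3\to X\times\CP^1$ is a principal $T^2$-bundle, the two circle factors acting by the principal action on $P$ and by the Hopf action on $S^3$, and this $T^2$-action is free because each factor already acts freely. By definition the associated bundle $M=P\times_{S^1}S^3$ is the quotient of $P\times S^3$ by the subcircle $\{(t,t^{-1})\}\subset T^2$ along which the gluing is performed, and this subcircle acts freely through the $P$-factor alone. Therefore the residual torus $T^2/\{(t,t^{-1})\}\cong S^1$ acts freely on $M$ with quotient $(P\times S^3)/T^2=X\times\CP^1$, so $M\to X\times\CP^1$ is a principal $S^1$-bundle; invariantly, $M\cong S(L\oplus L)$ is the unit sphere bundle of $L\oplus L$, where $L=P\times_{S^1}\CC$ has $c_1(L)=[\omega]$.

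Next I would compute the Chern class by a character bookkeeping for the torus quotient: writing $g\in H^2(\CP^1;\ZZ)$ for the generator carried by the Fubini--Study form $\omega_{FS}$, one gets $c_1(M\to X\times\CP^1)=[\omega]\pm g$ (with the obvious abuse of pulling classes back along the two projections), the sign depending only on the orientation conventions for the Hopf fibration and for the gluing. In either case this class is integral, since $[\omega]$ is, and it is represented by the symplectic form $\Omega:=\omega\pm\omega_{FS}$ on the closed manifold $X\times\CP^1$ --- note that $\pm\omega_{FS}$ is a symplectic form on the surface $\CP^1$ regardless of the sign, so $\Omega$ is non-degenerate. Thus $M$ is the Boothby--Wang fibration over $(X\times\CP^1,\Omega)$, and \cite[Theorem 6.1.26]{BG} equips its total space with a K-contact structure.

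The only genuine work is the middle step: checking carefully that the associated bundle $P\times_{S^1}S^3$, as a smooth $S^1$-manifold, is the circle bundle over $X\times\CP^1$ described above, and keeping track of orientation conventions so that the resulting Chern class is honestly a symplectic class (on the $\CP^1$-factor one might a priori land on the \emph{negative} of a Fubini--Study class, which, as noted, is harmless). Alternatively one may bypass this bookkeeping by quoting the Boyer--Galicki join construction: $M$ is the K-contact join $P\star S^3$, and the argument above is merely an unwinding of that.
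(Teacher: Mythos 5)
Your sketch is correct, and in fact the paper does not prove this statement at all: it is imported verbatim from \cite{HT}, so there is no internal argument to compare against. Your route is the standard unwinding of the construction: the $T^2$-action on $P\times S^3$ (principal action on $P$, Hopf action on $S^3$) is free because each factor acts freely, so the quotient of $P\times S^3$ by the anti-diagonal circle --- which is exactly the associated bundle $M=P\times_{S^1}S^3$, equivalently the sphere bundle $S(L\oplus L)$ with $L=P\times_{S^1}\CC$ --- carries a free residual circle action with quotient $X\times\CP^1$, and the character bookkeeping gives $c_1(M\to X\times\CP^1)=[\omega]\pm g$, which is integral and represented by the product symplectic form $\omega\pm\omega_{FS}$ (non-degenerate for either sign). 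Applying \cite[Theorem 6.1.26]{BG}, or \theoref{thm:k-contact-seifert} with the trivial orbifold structure on $X\times\CP^1$, then yields the K-contact structure, so the only care needed is the orientation/sign convention for the Hopf bundle, which, as you note, is harmless. What your argument buys over the bare citation is a self-contained proof together with extra structure: it exhibits $M$ as a genuine Boothby--Wang fibration over the closed symplectic manifold $X\times\CP^1$ (equivalently as the join $P\star S^3$, as you remark), which makes the quasi-regularity of the resulting K-contact structure and the relation $\pi_1(M)\cong\pi_1(X)$ used later in that section of the paper completely transparent.
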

 
 Now, we see that $M$ is a compact K-contact manifold with $\pi_1(M)=\pi_1(X)$. In particular, $b_1(M)=b_1(X)$, with any prescribed $b_1(X)$ because of \theoref{t:gompf}. 
 
\m  A more refined question comes from the fact that the full description of fundamental groups of symplectically aspherical manifolds is still an open question.  Therefore, it is not easy to describe Seifert bundles (and even principal circle bundles) over symplectically aspherical orbifolds (manifolds) with $b_1(M)\leq 1$. Partial results were obtained in~\cite{IKRT, KRT}. Below we give other examples. To do this, we need the Sullivan model theory. For the convenience of the reader, we expose the Sullivan theory in  Appendix B.
 
 \begin{example}\rm Here we recall some facts about symmetric spaces~\cite{He} and latttices in Lie groups~\cite{R}. Let $G$ be a simple non-compact Lie group and $K$ be a maximal compact subgroup of $G$. The homogeneous space $\hat X:=G/K$ is an irreducible  symmetric space. In particular, if one considers the case of {\it Hermitian symmetric space} $\widetilde X$, then it admits a Hermitian metric $h$, which is K\"ahler, $G$-invariant, and whose real part is a Riemannian metric $g$ on $\hat X$ as a Riemannian symmetric space. The list of such spaces is given in~\cite[Table II in Chapter IX]{He}. For example, the Hermitian symmetric spaces with classical $G$ are:
 $$SU(p,q)/S(U_p\times U_q), \,SO^*(2n)/U(n),$$
 $$ SO_0(p,q)/(SO(p)\times SO(q)),\quad Sp(n,\mathbb{R})/U(n).$$
 Hence, $h=g+i\omega$, where $\omega$ is a $G$-invariant K\"ahler (hence, symplectic)  form. There exists a co-compact lattice $\Gamma$ in $G$ which acts freely and properly on $G/K$. The latter is well known: on one hand, since $G$ is  simple, it admits a co-compact lattice ~\cite[Chapter 14]{R}, and on the other hand, since $K$ is a maximal compact subgroup, any such lattice admits a finite index sublattice acting freely on $G/K$ (Selberg's lemma). Therefore, $X=\Gamma\backslash{G\slash K}$ is a compact symplectic aspherical manifold. The total space $M$ of the Boothby-Wang fibration determined by $[\omega]$ is a K-contact manifold. Note that $\Gamma\subset G$ is a lattice in a simple Lie group $G$, therefore, $[\Gamma,\Gamma]=\Gamma$. Hence $b_1(X)=0$, since $\pi_1(X)\cong\Gamma$. It turns out that $b_1(M)=0$ as well. The latter can be seen, for example, as follows. Since $b_1(X)=0,$ the minimal model  $(\Lambda\,V,d)$ of $X$ does not have any generator of degree one, which is a cocycle. Since $M$ is a principal circle bundle over $X$, the relative Sullivan model of  this fiber bundle has  the form
 \begin{equation}
\label{relform}
 (\Lambda\,V,d)\rightarrow (\Lambda\,V\otimes\Lambda (t),D)\rightarrow (\Lambda (t),0),
 \end{equation}
 where $(\Lambda\,V,d)$ is the minimal model of $X$, and (over $\mathbb{Q}$)
 \[
 D|_{\Lambda\,V}=d,\,Dt=v,\,|v|=2, \,[v]=[\omega]\in H^2(X;\mathbb{Q})\cong H^2(\Lambda\,V,d).
\]
We have $[v]=[\omega]$ by~\cite[Example 4 in Section 15]{FHT}.
 Observe that the relative Sullivan model of the form \eqref{relform} cannot create elements of degree 1 which are cocycles. Hence, $b_1(M)=0$, as required.
\end{example} 

Note that the manifold $M$ is actually Sasakian, since $X$ is K\"ahler. 

\begin{question} Are there Seifert fibrations over symplectically aspherical manifolds which are K-contact, non-Sasakian, and admit K-contact structures with finite number of closed characteristics?
\end{question}  

\begin{appendices}
   
\section{Orbifolds}\label{sec:orbifolds}
\begin{df}[\cite{BG, MRT}]\label{d:orb}\rm (a) An $n$-dimensional (differentiable) orbifold is a pair $(X, \SU)$ where $X$ is a space and $\SU$ is an atlas $\SU=\{(\widetilde U_{\alpha},\phi_{\alpha}\Gamma_{\alpha})\}$. 
Here $\widetilde U_\alpha\subset\RR^n$, $U_\alpha\subset X$,  $\Gamma_{\alpha}$ is a finite subgroup of $GL(n,\RR)$, and 
$\phi_{\alpha}:\widetilde U_{\alpha} \to U_{\alpha}$ is a $\Gamma_{\alpha}$-invariant map which induces a 
homeomorphism $\widetilde U_{\alpha}/\Gamma_\alpha \cong U_{\alpha}$ onto an open set $U_\alpha$ of $X$. (Here $\Gamma_{\ga}$ acts linearly on $\widetilde U$ and trivially on $U$.)
There is also a condition of compatibility 
of charts: for each point $p \in U_{\alpha} \cap U_{\beta}$ there is some $U_\gamma\subset 
U_{\alpha} \cap U_{\beta}$ with $p \in U_\gamma$, monomorphisms
$\imath_{\gamma\alpha}: \Gamma_\gamma\inc \Gamma_\alpha$,
$\imath_{\gamma\beta}: \Gamma_\gamma\inc \Gamma_\beta$, and 
open embeddings $f_{\gamma\alpha}:\widetilde U_\gamma \to \widetilde U_\alpha$, 
$f_{\gamma\beta}:\widetilde U_\gamma \to \widetilde U_\beta$, which satisfy $\imath_{\gamma\alpha}(g)(f_{\gamma\alpha}(x))=
f_{\gamma\alpha}(g(x))$ and $\imath_{\gamma\beta}(g)(f_{\gamma\beta}(x))=
f_{\gamma\beta}(g(x))$, for $g\in \Gamma_\gamma$. 
 
\m For brevity, sometimes we will write $X$ instead of $(X,\SU)$ if there is no danger of confusion.

\m (b) We call $x\in X$ a {\em smooth point} if a neighborhood of $x$ is 
homeomorphic to a ball in $\RR^n$, and {\em singular} otherwise. 
We say that an orbifold $(X, \SU)$ is {\em smooth} if all its points are smooth. This is equivalent to say that $X$ is a topological manifold. 
\end{df}

\m As the groups $\Gamma_\alpha$ are finite, we can arrange (after conjugations if necessary) that $\Gamma_\alpha \subset \operatorname{O}(n)$. 

\begin{df}\label{d:orient}\rm The orbifold is {\em orientable} if $\Gamma_\alpha \subset SO(n)$ for all $\ga$ and the embeddings $f_{\gamma\alpha}$ preserve orientation. 
Note that for any point $x\in X$, we can always arrange  a chart $\phi:\widetilde U\to U$ with $\widetilde U\subset \RR^n$ being a 
ball centered at $0$ and $\phi(0)=x$, and $\widetilde U/\Gamma\cong U$, with $\Gamma \subset SO(n)$. In this case, we call $\Gamma$
the {\em isotropy group} at $x$. We call an orbifold {\em cyclic} if all its isotropy groups are cyclic groups $\Gamma\cong \ZZ_m$,
and $m=m(x)$ is the order of the isotropy at $x$. We call $x\in X$ a {\em regular point} if $m(x)=1$, otherwise we call it a (non-trivial) {\em isotropy point}. Clearly a regular point is smooth, but not the converse.
\end{df}

\m From here and till the end of the section, let $X$ be a 4-dimensional closed orientable cyclic orbifold. Take $x\in X$ and a chart $\phi:\widetilde U\to U$ around $x$. Let 
$\Gamma=\mathbb{Z}_m\subset SO(4)$ be the isotropy group. Then $U$ is homeomorphic to an open neighborhood of 
$0\in \mathbb{R}^4/\mathbb{Z}_m$. A matrix of finite order in $SO(4)$ is conjugate to a diagonal matrix in $U(2)$ of
the type 
\[
(\exp(2\pi i j_1/m),\exp(2\pi i j_2/m))=(\xi^{j_1},\xi^{j_2}),
\]
 where $\xi=e^{2\pi i/m}$. Therefore we can
suppose that $\widetilde U\subset \CC^2$ and $\Gamma=\ZZ_m=\la \xi\ra\subset U(2)$ acts on $\widetilde U$ as
 \begin{equation}\label{action}
   \xi \cdot (z_1,z_2) := (\xi^{j_1} z_1,\xi^{j_2}z_2).
 \end{equation}
Here $j_1,j_2$ are defined modulo $m$.
As the action is effective, we have $\gcd(j_1,j_2,m)=1$. 

\begin{df}\label{d:isotropy}\rm We say that $D\subset X$ is an {\em isotropy surface of multiplicity $m$} if 
 \begin{itemize}
 \item
 $D$ is closed;
 \item there is a dense open subset $D^\circ\subset D$ which is a surface;
 \item $m(x)=m$ for all $x\in D^\circ$.
\end{itemize}
\end{df}

\begin{prop}\label{prop:models}
Let $X$ be as above and $x\in X$ with local model $\CC^2/\ZZ_m$. Then there
are at most two isotropy surfaces $D_i$, with multiplicity $m_i | m$, through $x$. If there are two such
surfaces $D_1,D_2$, then they intersect transversely and $\gcd(m_1,m_1)=1$. The point is smooth if and only if $m_1m_2=m$.
\end{prop}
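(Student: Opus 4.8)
The plan is to work entirely in the uniformizing chart $\phi\colon\widetilde U\to U$ around $x$ given by \eqref{action}, where $\widetilde U\subset\CC^2$ and $\ZZ_m=\la\xi\ra$ acts by $\xi\cdot(z_1,z_2)=(\xi^{j_1}z_1,\xi^{j_2}z_2)$ with $\gcd(j_1,j_2,m)=1$, and to read off the isotropy directly. First I would compute the stabilizer of an arbitrary point $(z_1,z_2)$, namely $\{\xi^k\mid \xi^{kj_1}z_1=z_1\ \text{and}\ \xi^{kj_2}z_2=z_2\}$: it is trivial whenever $z_1z_2\neq0$ (here one uses $\gcd(j_1,j_2,m)=1$); along $\{z_2=0\}$ it equals $G_1:=\{\xi^k\mid \xi^{kj_1}=1\}$, a cyclic group of order $\gcd(m,j_1)$; and along $\{z_1=0\}$ it equals $G_2:=\{\xi^k\mid \xi^{kj_2}=1\}$, of order $\gcd(m,j_2)$. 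Hence the non-regular locus of $U$ is the union of the two $2$-dimensional submanifolds $\phi(\{z_2=0\})$ and $\phi(\{z_1=0\})$ (omitting one of them if the corresponding order is $1$). Since these two images meet only at $x$, any isotropy surface of $X$ through $x$ must coincide near $x$ with one of them; so at most two isotropy surfaces pass through $x$, with multiplicities $\gcd(m,j_1)$ and $\gcd(m,j_2)$, each dividing $m$.

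For the transversality and coprimality assertions I would note that, in the chart, the two branches are the complex coordinate lines $\{z_2=0\}$ and $\{z_1=0\}$, which meet transversely at $0$, and that $\gcd\bigl(\gcd(m,j_1),\gcd(m,j_2)\bigr)=\gcd(m,j_1,j_2)=1$ by effectiveness of the action.

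It then remains to decide when $\CC^2/\ZZ_m$ is a topological manifold at $\phi(0)$, and the claim is that this happens exactly when $\gcd(m,j_1)\gcd(m,j_2)=m$, i.e. when the product of the two multiplicities equals $m$. Since $G_1\cap G_2=\{1\}$, the subgroup $G':=G_1G_2$ has order $\gcd(m,j_1)\gcd(m,j_2)$. If this order is $m$, then $\ZZ_m=G'=G_1\times G_2$, with $G_2$ acting faithfully by rotations on the $z_1$-factor and trivially on the $z_2$-factor, and $G_1$ conversely; thus $\CC^2/\ZZ_m\cong(\CC/G_2)\times(\CC/G_1)\cong\CC^2$ (a plane modulo a finite group of rotations being again a plane), so $x$ is smooth. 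If instead $\gcd(m,j_1)\gcd(m,j_2)<m$, I would identify $\CC^2/G'$ with $\CC^2$ in coordinates $w_1=z_1^{\gcd(m,j_2)}$ and $w_2=z_2^{\gcd(m,j_1)}$, note that the residual cyclic group $\ZZ_m/G'$ of order $e:=m/\bigl(\gcd(m,j_1)\gcd(m,j_2)\bigr)>1$ acts linearly on this $\CC^2$, and verify by a divisibility computation (writing $j_\ell=\gcd(m,j_\ell)\,a_\ell$ for $\ell=1,2$ with $a_\ell$ coprime to $m/\gcd(m,j_\ell)$) that this residual action is free on $\CC^2-\{0\}$. Then $\CC^2/\ZZ_m\cong\CC^2/(\ZZ_m/G')$ is the open cone on a closed $3$-manifold $L$ obtained as the quotient of $S^3$ by a free linear $\ZZ_e$-action, so $H_1(L)\cong\ZZ_e\neq0$; since the open cone $cL$ on a closed $3$-manifold is a topological $4$-manifold only if that $3$-manifold is an integral homology sphere (by the local homology identity $H_*(cL,\,cL-\{p\})\cong\widetilde H_{*-1}(L)$ at the vertex $p$), the point $x$ is not smooth.

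The isotropy and $\gcd$ bookkeeping in the first two steps is routine. The main obstacle is the final step: verifying that the residual $\ZZ_m/G'$-action on $\CC^2/G'\cong\CC^2$ is free away from the origin, which forces one to track the exponents $j_1,j_2$ carefully through the quotient by $G'$, and then invoking the cone criterion (local homology of a topological manifold point) to rule out a smooth point in that case.
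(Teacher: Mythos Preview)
Your argument is correct. The paper itself does not prove this proposition but simply cites \cite[Proposition~2]{MRT}; the local-model discussion immediately following the proposition (cases (a) and (b)) is essentially your ``if'' direction of the smoothness criterion, namely the product decomposition $\CC^2/\ZZ_m\cong(\CC/\ZZ_{m_2})\times(\CC/\ZZ_{m_1})$ when $m=m_1m_2$. What you add beyond the paper's exposition is a clean self-contained proof of the ``only if'' direction: passing to $\CC^2/G'\cong\CC^2$, checking that the residual $\ZZ_e$-action (with $e=m/(m_1m_2)>1$) is free off the origin, and then invoking the local-homology characterization of manifold points to see that the cone on the resulting lens space $S^3/\ZZ_e$ is not a topological $4$-ball since $H_1(S^3/\ZZ_e)\cong\ZZ_e\neq0$. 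The freeness verification (using $j_\ell=m_\ell a_\ell$ with $\gcd(a_\ell,m/m_\ell)=1$, hence $\gcd(a_\ell,e)=1$) and the stabilizer computations are all sound.
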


\begin{proof}
See~\cite[Proposition 2]{MRT}.
\end{proof}

Now we want to describe the local models for the $\mathbb{Z}_m$-action in two cases used in this work. For all other possibilities we refer to \cite{MRT}. For an action given by \eqref{action}, we 
set $m_1:=\gcd(j_1,m)$, $m_2:=\gcd(j_2,m)$. Note that $\gcd(m_1,m_2)=1$, so we can
write $m_1m_2 d=m$, for some integer $d$. Put $j_1=m_1 e_1$, $j_2=m_2 e_2$, $m=m_1 c_1=m_2 c_2$.
Clearly $c_1=m_2d$ and $c_2=m_1d$ with $d=\gcd(c_1,c_2)$.

In what follows we  consider the following  cases:

\begin{enumerate}
\item[(a)] There are two isotropy surfaces and $x$ is a smooth point, $m_1,m_2>1$, $d=1$. 
Let us see that the action is equivalent to the product of one action on each factor $\mathbb{C}$. 
In this case $c_2=m_1$ and $c_1=m_2$. So $\gcd (c_1,c_2)=1$ and $m=c_1c_2$. The action is given by
\[
\xi \cdot (z_1,z_2) := (\exp(2\pi i e_1/c_1)z_1,\exp(2\pi i e_2/c_2)z_2).
\]
 We see that 
 $$
 \begin{array}{ccc}
 \xi^{c_1}\cdot (z_1,z_2)=(z_1, \exp(2 \pi i c_1 e_2/c_2) z_2),  \\
 \xi^{c_2}\cdot (z_1,z_2)=(\exp(2 \pi i c_2 e_1/c_1) z_1, z_2),
 \end{array} 
 $$
so, the surfaces $D_1=\{(z_1,0)\}$ and $D_2=\{(0,z_2)\}$ have isotropy groups $\la \xi^{c_1}\ra=\mathbb{Z}_{m_1}$ 
and $\la\xi^{c_2}\ra=\mathbb{Z}_{m_2}$, respectively. In this case $m=m_1m_2$, $d=1$.

Note that $\ZZ_m=\la \xi^{c_1}\ra \times \la\xi^{c_2}\ra$ if and only if $d=\gcd(c_1,c_2)=1$. 
In this case the action of $\mathbb{Z}_m$ decomposes as the product of the actions of 
$\mathbb{Z}_{m_2}$ and $\mathbb{Z}_{m_1}$ on each of the factors $\CC$. The quotient space is
$\CC^2/\ZZ_m\cong \CC/\ZZ_{m_2} \x \CC/\ZZ_{m_1}$, which is homeomorphic to $\CC\x\CC$, and hence $x$ is a
smooth point.

\item[(b)]  There is exactly one isotropy surface and $x$ is a smooth point. In this case $m_2=1$ and $m_1=m$. As $d=1$, this is basically
as case (b). The action is $\xi \cdot (z_1,z_2)=(z_1,\exp(2\pi j_2/m)  z_2)$. There is only one surface 
$D_1=\{(z_1,0)\}$ with non-trivial isotropy $m$, and all its points have the same 
isotropy. The quotient $\CC^2/\ZZ_m =\CC \x (\CC/\ZZ_m)$ is topologically smooth.
\end{enumerate}

In case (a), we can change the generator $\xi=e^{2\pi i/m}$ of $\ZZ_{m}$ to 
$\xi'=\xi^k$ for $k$ such that $k e_i \equiv 1 \pmod{m_i}$, $i=1,2$,
so that 
\[
\xi' \cdot (z_1,z_2)=\left(\exp\left(\frac{2 \pi i}{m_2}\right)z_1, \exp\left(\frac{2 \pi i}{m_1}\right)z_2\right).
\] 
With this new generator, the action has model $\CC^2$ with the action 
\[
\xi\cdot (z_1,z_2)=(\xi^{m_1}z_1, \xi^{m_2} z_2),\quad \xi=e^{2\pi i/m}.
\]
 Similar remark applies to case (b).

\section{Sullivan models}\label{sec:sullivan}

We use Sullivan models of fibrations as a tool of calculating
cohomology in some examples. In the sequel our notation follows
\cite{FHT}. In this section we denote by $\mathbb{K}$ an arbitrary field of
characteristic zero. We consider the
category of commutative graded differential algebras (or, in the
terminology of~\cite{FHT}, cochain algebras). If $(A,d)$ is a cochain
algebra with a grading $A=\oplus_pA^p$, the degree $p$ of $a\in A^p$
is denoted by $|a|$. 

\m Given a graded vector space $V,$ consider the algebra $\Lambda
V=S(V^{\operatorname{even}})\otimes\Lambda (V^{\operatorname{odd}})$, that is, $\Lambda V$ denotes
a free algebra which is a tensor product of a symmetric algebra over
the vector space $V^{\operatorname{even}}$ of elements of even degrees, and an
exterior algebra over the vector space $V^{\operatorname{odd}}$ of elements of odd
degrees.

\m We will use the following notation:
\begin{itemize}
\item by $\Lambda V^{\leq p}$ and $\Lambda V^{>p}$ are denoted the
subalgebras generated by elements of degree $\leq p$ and of degree
$>p,$ respectively;
\item if $v\in V$ is a generator, $\Lambda v$ denotes
the subalgebra generated by $v\in V$,

\item $\Lambda^pV=\langle v_1,\ldots v_p\rangle$,
 $\Lambda^{\geq q}V=\oplus_{i\geq
q}\Lambda^iV$, $\Lambda^+V=\Lambda^{\geq 1}V.$
\end{itemize}

\begin{df}\rm A {\it Sullivan algebra} is a
commutative graded differential algebra of the form $(\Lambda V,d)$,
where
\begin{itemize}
\item $V=\oplus_{p\geq 1}V^p$;
\item $V$ admits an increasing  filtration
\[
V(0)\subset V(1)\subset \cdots\subset V=\bigcup_{k=0}^{\infty}V(k)
\]
such that $d=0$ on $V(0)$ and 
$d: V(k)\rightarrow \Lambda V(k-1),\,k\geq 1$.
\end{itemize}
\end{df}

\begin{df}\rm  A Sullivan algebra $(\Lambda
V,d)$ is called {\it minimal}, if
\[
\operatorname{Im}\, d\subset \Lambda^+ V\cdot\Lambda^+V.
\]
\end{df}

\begin{df}\rm  A {\it Sullivan model} of
a commutative graded differential algebra $(A,d_A)$ is a morphism
$$m: (\Lambda V,d)\rightarrow (A,d_A)$$
inducing an isomorphism $m^*: H^*(\Lambda V,d)\rightarrow
H^*(A,d_A)$. 
\end{df}

If $X$ is a CW-complex, there is a cochain algebra
$(A_{PL}(X),d_A)$ of polynomial differential forms. For a smooth
manifold $X$ we take a smooth triangulation of $X$ and as  the model
of $X$ the Sullivan model of $A_{PL}(X).$ If it is minimal, it is
called the {\it Sullivan minimal model of} $X$. 
It is well known (see \cite{FHT}, Proposition 12.1) that any commutative cochain algebra $(A,d)$ satisfying $H^0(A,d)=\mathbb{K}$ admits a Sullivan model.

\begin{defin}\rm A {\it relative Sullivan algebra}
is a graded commutative differential algebra of the form $(B\otimes
\Lambda V,d)$ such that
\begin{itemize}
\item $(B,d)=(B\otimes 1,d), H^0(B)=\mathbb{K},$
\item $1\otimes V=V=\oplus_{p\geq 1}V^p$,
\item $V=\cup_{k=0}^{\infty} V(k), \text{where } V(0)\subset V(1)\subset\cdots $,
\item $d: V(0)\rightarrow B,\, d:V(k)\rightarrow
B\otimes\Lambda V(k-1),\quad k\geq 1$
\end{itemize}
\end{defin}
Relative Sullivan algebras are models of fibrations. Let
$p: X\rightarrow Y$ be a Serre fibration with the homotopy fiber
$F$. Choose Sullivan models
\[
m_Y: (\Lambda V_Y,d)\rightarrow (A_{PL}(Y), d_Y),\quad \overline m: (\Lambda V,\ov d)\rightarrow A_{PL}(F).
\]

There is a commutative diagram of cochain algebra morphisms
\[
\CD
A_{PL}(Y) @>>> A_{PL}(X) @>>> A_{PL}(F)\\
@A{m_Y}AA @A{m}AA @A{\overline m}AA\\
(\Lambda V_Y,d) @>>> (\Lambda V_Y\otimes\Lambda V,d) @>>> (\Lambda
V,\ov d)
\endCD
\]
in which $m_Y,m,\overline m$ are all Sullivan models (see~\cite[Propositions
15.5 and 15.6]{FHT}).

   \end{appendices}

\end{document}